\title{Arbitrarily high-order (weighted) essentially non-oscillatory finite difference schemes for anelastic flows on staggered meshes}
\author{Siddhartha Mishra\thanks{Seminar for Applied Mathematics, ETH Z\"urich, R\"amistrasse 101, Z\"urich, Switzerland.}, Carlos Par\'{e}s-Pulido\thanks{Seminar for Applied Mathematics, ETH Z\"urich, R\"amistrasse 101, Z\"urich, Switzerland.}  and Kyle G. Pressel\thanks{Pacific Northwest National Laboratory, P.O. Box 999, Richland, WA 99352, USA}} 
\newtheorem{theorem}{Theorem}
\newtheorem{lemma}{Lemma}
\newtheoremstyle{algorithm}
{}
{}
{}
{}
{\bfseries}
{.}
{\newline}
{}
\theoremstyle{algorithm}
\newtheorem{algorithm}{Algorithm}
\theoremstyle{definition}
\newtheorem{remark}{Remark}
\newtheorem{proposition}{Proposition}
\numberwithin{equation}{section}
\newcommand{\rev}[1]{#1}
\newcommand{\revb}[1]{#1}
\begin{document}
	
	\maketitle
	\begin{abstract}
		We propose a WENO finite difference scheme to approximate anelastic flows, and scalars advected by them, on staggered grids. In contrast to existing WENO schemes on staggered grids, the proposed scheme is designed to be arbitrarily high-order accurate as it judiciously combines ENO interpolations of velocities with WENO reconstructions of spatial derivatives. A set of numerical experiments are presented to demonstrate the increase in accuracy and robustness with the proposed scheme, when compared to existing WENO schemes and state-of-the-art central finite difference schemes. 				
		
	\end{abstract}
	\section{Introduction}
	
	In numerical modeling of fluid systems that are characterized by a low Mach number, it is often advantageous to introduce approximations to the fully compressible equations that eliminate acoustic waves. Doing so allows the explicit time integration of the governing equations to take much longer time steps in comparison to a similar integration of the compressible equations. This is because numerical stability in the soundproofed system does not depend on the phase velocity of acoustic waves. Various approximations to the compressible equations that eliminate acoustic waves have been developed including the incompressible, Boussinesq, and pseudo-incompressible approximations. The choice of which approximation is used is determined by the properties of the system being studied. 
	
	In atmospheric science, the anelastic system of equations is widely used as the basis for limited area models, because many atmospheric flows are buoyancy driven, and the stratification of the atmosphere makes it natural to assume that vertical gradients of density are much larger than horizontal gradients. In the anelastic system, the momentum, entropy, and continuity equations are given by
	\begin{align}
		\frac{\partial U}{\partial t} + \frac{1}{\rho_0} \nabla \cdot (\rho_0 U \otimes U) &= - \nabla \left(\frac{p'}{\rho_0} \right) + b e_3,  \label{eq:momentum} \\
		\frac{\partial s}{\partial t} + \frac{1}{\rho_0} \nabla \cdot \left(\rho_0 U s \right) = 0 \label{eq:entropy} \\
		\nabla \cdot (\rho_0 U) & = 0 \label{eq:continuity}
	\end{align}
	respectively.  Here $U = \left(u_1, u_2, u_3 \right) \in \mathbb{R}^3$ is the fluid velocity, $\rho_0$ is a horizontally homogeneous reference state density, $p^\prime = p - p_0$ is the dynamic pressure, $b$ is the buoyancy, $e_3 = (0,0,1)$, and $s$ is the specific entropy defined as in \cite{Pressel2015}. Following \cite{Pauluis2008}, we define $\rho_0$ to be consistent with an isentropic and hydrostatic state and where the buoyancy $b = g \left(\rho/\rho_0 - 1 \right)$ couples the momentum to thermodynamics, and $\rho$ is determined from the equation of state. For most of this paper, we focus on equations \eqref{eq:momentum} and \eqref{eq:continuity}, as with this choice, eq. \eqref{eq:entropy} is only coupled to the other two only through $b = b(\rho(s))$, and can be treated as a source term.
	
	Given their importance in applications, a large variety of numerical methods for approximating anelastic (incompressible) flows are available. In applications where complex domain geometries are rarely encountered, finite difference schemes are a popular discretization framework as they can account for more general boundary conditions than spectral methods \cite{Gottlieb1984}. \rev{Alternative numerical frameworks, such as the finite element method, discontinuous Galerkin methods, etc. are possible approaches for these problems, particularly on domains with complex geometries. However, on account of their computational efficiency and simplicity of implementation, finite difference methods remain a very attractive option for many applications, particularly in climate and weather modelling.}
	
	The most straightforward finite difference discretizations of \eqref{eq:momentum} and \eqref{eq:continuity} are on \emph{collocated} meshes, where one evolves point values of the velocity and pressure at cell centers. However, it is well known that this procedure can lead to what is termed as \emph{velocity-pressure decoupling}, \cite{Patankar1980} and references therein. Consequently, nonphysical checkerboard modes are obtained as solutions to the (discrete) elliptic equation that one has to solve in order to compute the pressure from the continuity equation \eqref{eq:continuity}.
	
	A possible remedy for these nonphysical numerical artifacts is the use of \emph{staggered meshes}. In this framework, each velocity component is discretized on the center of the underlying normal cell edge. The pressure (and any passively or actively advected scalars) is discretized at cell centers. The spatial derivatives in \eqref{eq:momentum} can then be discretized by (high-order) central differences. A large number of such \emph{central schemes} exist and are employed in many atmospheric codes. 
	
	In many applications, the Reynolds number of the underlying flow is very high and description of the dynamics requires computing contributions across a large range of scales. Given the infeasibility of direct numerical simulation (DNS) in such problems, one resorts to an additional turbulence model for \eqref{eq:momentum}. \emph{Large eddy simulations} (LES) are a very attractive framework for turbulence modeling. In LES, a certain range of scales are numerically resolved and remaining \emph{sub-grid scales} (SGS) are modeled by a suitable closure model; a detailed discussion can be found in \cite{Pope2000, Sagaut2006}.
	Many closure models are based on adding eddy diffusivity in terms of grid-scale gradients (\cite{Smagorinksy1963, Lilly1966}, further references in \cite{Pope2004}).
	
	As the leading order truncation terms for central finite difference schemes are dispersive in nature, the dominant source of numerical dissipation is delegated to the SGS terms. Hence, central finite difference schemes are widely used in the context of LES for anelastic flows. However, in many recent papers, it has been observed that central difference schemes together with SGS terms may not provide enough numerical diffusion to enable a stable computation. For instance, in \cite{Pressel2017}, the authors demonstrated that the lack of numerical dissipation and the generation of dispersive oscillations by central finite difference schemes led to very inaccurate computation of stratocumulus clouds. In particular, there was not enough dissipation in the mixing zone, where sharp gradients (at grid scale) are encountered which lead to excessive SGS scalar fluxes. Given this well-known inadequacy of central finite difference schemes in resolving sharp grid scale gradients, these schemes can be very inaccurate in the LES of a significant class of turbulent flows. Moreover, a crucial constraint of any numerical simulation of anelastic flows is to retain positivity and bound preservation for advected scalars. It is well-known that central difference schemes can yield spurious oscillations that may lead to a breakdown of positivity of advected scalars.
	
	An alternative discretization framework for such flows can be provided by (\emph{Weighted}-) \emph{essentially non-oscillatory} or WENO schemes. These schemes were first developed in \cite{Harten1987,Shu1989,Shu1997} in the context of robust approximation of compressible flows which are characterized by discontinuities such as shock waves. In this discretization framework, either cell averages or point values are reconstructed from a piecewise polynomial interpolation, based on a clever stencil selection or stencil weighing procedure that tracks the direction of smoothness of the underlying functions. Consequently, a precise amount of numerical diffusion is added in the vicinity of sharp gradients (at grid scale) to stabilize computations. Moreover, the WENO procedure yields schemes with arbitrarily high order of accuracy. 
	
	WENO schemes for anelastic flows on collocated grids were developed in \cite{Balsara2000} and references therein. WENO schemes for anelastic flows on staggered grids have been described recently in \cite{Pressel2015,Pressel2017} and references therein. In these articles, the authors adapted the WENO procedure to approximate anelastic flows on staggered grids and used these schemes for LES of stratocumulus clouds. The results were very satisfactory and a significant gain in accuracy (with respect to observations) was obtained over central finite difference schemes. However a detailed analysis of the WENO schemes of \cite{Pressel2015,Pressel2017} reveals that although arbitrarily high-order WENO interpolations are employed, the full scheme is at most \emph{second-order} accurate (see section \ref{sec:2} for details). 
	
	As a very high order of accuracy is beneficial for resolving flows with significant range of small scales such as in LES \cite{Ghosal1996}, our aim in this article is to design WENO schemes with these properties. In particular, we design finite difference schemes that are
	\begin{itemize}
		\item Able to resolve sharp gradients without oscillations. 
		
		\item (Formally) arbitrarily high-order accurate for the momentum equation \eqref{eq:momentum} of anelastic flows.
		
		\item Formally arbitrarily high-order accurate for scalars, passively or actively advected by the flow.

	\end{itemize}
	We design these schemes by judiciously combining a WENO procedure with an additional essentially non-oscillatory (ENO) interpolation step to provide point values of fields. The arbitrarily high-order scheme for scalars is based on a non-conservative formulation. 
	
	The rest of the paper is organized as follows. In section \ref{sec:2}, we describe the existing WENO schemes of \cite{Pressel2015,Pressel2017} on a staggered grid and prove that they are at most second-order accurate. The novel WENO scheme of this paper is presented in section \ref{sec:3} and numerical experiments are presented in section \ref{sec:4}. In section \ref{sec:5}, we present the arbitrarily high-order schemes for scalars, advected by the anelastic flow.
	\section{Existing WENO schemes on staggered grids.}
	\label{sec:2}
	For simplicity of notation and exposition, we consider here the two-dimensional version of the momentum equation \eqref{eq:momentum}. The corresponding velocity field is denoted as $U = (u_1,u_2)$; for convenience of notation, we will also use $U = (u, v)$.
	\subsection{The Arakawa C-grid}
	The computational domain is the rectangle $(x_{min}, x_{max}) \times (y_{min}, y_{max}) \subset \mathbb{R}^2$. This rectangle is partitioned into a grid of $N_x \times N_y$ cells $C_{ij} = [x_i, x_{i+1}) \times [y_j, y_{j+1})$, $\forall(i,j) \in \{0,1,\dots,N_x\} \times \{0,1,\dots,N_y\}$, with $x_i = x_{min} + i\Delta x,\  y_j = y_{min} + j \Delta y$ and  $\Delta x = \frac{x_{max}-x_{min}}{N_x},\ \Delta y = \frac{y_{max}-y_{min}}{N_y}$. The grid cell centers are $(x_{i+\frac{1}{2}}, y_{j + \frac{1}{2}})$ with $ x_{i + \frac{1}{2}} = {x_{min}} + (i + \frac{1}{2}) \Delta x,\ y_{j + \frac{1}{2}} = y_{min} + (j + \frac{1}{2}) \Delta y$.
	
	On this grid, we consider what is called an Arakawa-C \cite{Arakawa1977} staggering of variables on the computational grid, depicted in Figure \ref{fig:staggeredgrid}. 
	\rev{Alternative choices of spatial arrangement of values, such as Arakawa-A (collocated), or Arakawa-B, with all components of velocity evaluated at the same location, may produce unstable checkerboard modes and are unsuitable for projection-based solvers for incompressible flows.}
	In this configuration, scalars (see section \ref{sec:5}) lie at grid cell centers (denoted $\phi_{i + \frac{1}{2}, j + \frac{1}{2}} := \phi(x_{i+\frac{1}{2}}, y_{j+\frac{1}{2}})$); pressure is diagnostically computed at those points as well. Conversely, velocities lie at the center of the cell interfaces normal to their direction; specifically, $u$ velocities are located at $(x_i, y_{j + \frac{1}{2}})$, and $v$ at $(x_{i + \frac{1}{2}}, y_j)$. We denote these as $u_{i, j+\frac{1}{2}}  := u(x_{i}, y_{j+\frac{1}{2}})$, $v_{i + \frac{1}{2}, j} := v(x_{i+\frac{1}{2}}, y_{j})$. 
	
	\begin{figure}
		\centering
		\begin{tikzpicture}
		
		\draw[step=2cm,gray,very thin] (-0.5, -0.5) grid (6.5, 6.5);
		\draw[thick, line width=0.7mm] (2,2) rectangle (4,4);
		\node at (-0.5,0.2) {$y_{j-1}$};
		\node at (-0.5,2.2) {$y_{j}$};
		\node at (-0.5,4.2) {$y_{j+1}$};
		\node at (-0.5,6.2) {$y_{j+2}$};
		\node at (0.5, -0.3) {$x_{i-1}$};
		\node at (2.3, -0.3) {$x_{i}$};
		\node at (4.5, -0.3) {$x_{i+1}$};
		\node at (6.5, -0.3) {$x_{i+2}$};
		
		\fill (3,3) circle[radius=2pt];
		\node[scale=0.75] at (3,3.3) {$\{p, \phi\}_{i + \frac{1}{2}, j + \frac{1}{2}}$};
		\draw[->] (3, 1.8) -- (3, 2.2);
		\node[scale=0.75] at (3.5,2.3) {$v_{i + \frac{1}{2}, j}$};
		\draw[->] (1.8,3) -- (2.2,3);
		\node[scale=0.75] at (2.5, 2.7) {$u_{i, j + \frac{1}{2}}$};
		
		\fill (3,5) circle[radius=2pt];
		\node[scale=0.75] at (3,5.3) {$\{p, \phi\}_{i + \frac{1}{2}, j + \frac{3}{2}}$};
		\draw[->] (3, 3.8) -- (3, 4.2);
		\node[scale=0.75] at (3.5,4.3) {$v_{i + \frac{1}{2}, j+1}$};
		\draw[->] (1.8,5) -- (2.2,5);
		\node[scale=0.75] at (2.5, 4.7) {$u_{i, j + \frac{3}{2}}$};
		
		\fill (5,3) circle[radius=2pt];
		\node[scale=0.75] at (5,3.3) {$\{p, \phi\}_{i + \frac{3}{2}, j + \frac{1}{2}}$};
		\draw[->] (5, 1.8) -- (5, 2.2);
		\node[scale=0.75] at (5.5,2.3) {$v_{i + \frac{3}{2}, j}$};
		\draw[->] (3.8,3) -- (4.2,3);
		\node[scale=0.75] at (4.7, 2.7) {$u_{i+1, j + \frac{1}{2}}$};
		
		\fill (3,1) circle[radius=2pt];
		\node[scale=0.75] at (3,1.3) {$\{p, \phi\}_{i + \frac{1}{2}, j - \frac{1}{2}}$};
		\draw[->] (3, -0.2) -- (3, 0.2);
		\node[scale=0.75] at (3.5,0.3) {$v_{i + \frac{1}{2}, j-1}$};
		\draw[->] (1.8,1) -- (2.2,1);
		\node[scale=0.75] at (2.5, 0.7) {$u_{i, j - \frac{1}{2}}$};
		
		\fill (1,3) circle[radius=2pt];
		\node[scale=0.75] at (1,3.3) {$\{p, \phi\}_{i - \frac{1}{2}, j + \frac{1}{2}}$};
		\draw[->] (1, 1.8) -- (1, 2.2);
		\node[scale=0.75] at (1.5,2.3) {$v_{i - \frac{1}{2}, j}$};
		\draw[->] (-0.2,3) -- (0.2,3);
		\node[scale=0.75] at (0.6, 2.7) {$u_{i-1, j + \frac{1}{2}}$};
		
		\fill (5,5) circle[radius=2pt];
		\draw[->] (5, 3.8) -- (5, 4.2);
		\draw[->] (3.8,5) -- (4.2,5);
		
		\draw[->] (5.8,3) -- (6.2,3);
		\draw[->] (5.8,5) -- (6.2,5);
		\draw[->] (5.8,1) -- (6.2,1);
		
		\draw[->] (3, 5.8) -- (3, 6.2);
		\draw[->] (5, 5.8) -- (5, 6.2);
		\draw[->] (1, 5.8) -- (1, 6.2);
		
		\fill (1,5) circle[radius=2pt];
		\draw[->] (1, 3.8) -- (1, 4.2);
		\draw[->] (-0.2,5) -- (0.2,5);
		
		\fill (1,1) circle[radius=2pt];
		\draw[->] (1, -0.2) -- (1, 0.2);
		\draw[->] (-0.2,1) -- (0.2,1);
		
		\fill (5,1) circle[radius=2pt];
		\draw[->] (5, -0.2) -- (5, 0.2);
		\draw[->] (3.8,1) -- (4.2,1);
		
		\end{tikzpicture}
		\caption{The staggered Arakawa-C grid in 2D. Velocities $(u,v)$ are stored at cell interfaces, scalars $\phi$ at cell centers. Pressure $p'$ is computed at cell centers as well.}
		\label{fig:staggeredgrid}
	\end{figure}
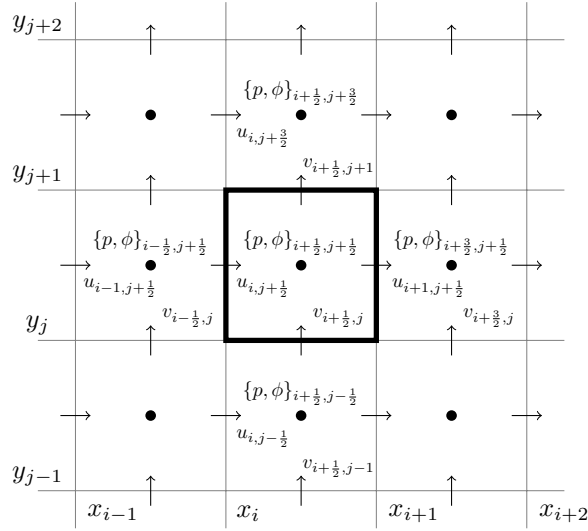
	Our aim is to design a WENO scheme on this grid to approximate the anelastic equations of motion. We start by providing a very brief description of the WENO framework for the sake of completeness.

	\subsection{An overview of the ENO and WENO methodology}
	Given a grid and the cell averages (resp. point values) of a function on the grid, ENO and WENO methods provide a reconstruction (resp. interpolation) of the function at a set of points on the grid in terms of piecewise polynomials. See \cite{Harten1987, Shu1997, Shu1989} and references therein.
	
	For simplicity, we consider a function $w:[x_L,x_R] \rightarrow {\mathbb R}$ on a 1D grid with cells $C_j := [x_{j-\frac{1}{2}}, x_{j+\frac{1}{2}})$ and aim to reconstruct piecewise polynomials from cell averages, with formal order of accuracy $k \in \mathbb{N}$. To this end, the key idea behind ENO is a careful choice of the stencil; i.e. the set of $k$ cells $S^{(r)}(j) = \{ C_{j-r}, \dots, C_{j-r+k-1}\}$, $r  \in \{0, \dots, k-1 \}$, from which information is taken. This choice is based on the smoothness of the function: out of the available stencils, the interpolation corresponds to one where the function is the \emph{most regular}. This measurement of smoothness can be performed with Newton's divided differences. Denoting the chosen stencil by $r$, and denoting by $\bar{w}_j$ the cell average of $w$ in cell $C_j$, the reconstruction has the form:
	
	\[ w_{j+\frac{1}{2}}^{(r)} := \sum_{i=0}^{k-1} c_{ri} \bar{w}_{i-r+j} = w(x_{j+\frac{1}{2}}^-) + O(\Delta x^k), \]
	where the constants $c_{ri}$ are reconstruction weights, depending on stencil $r$, order $k$ of the interpolation, and on the grid, but not on the underlying function $w$.
	
	Even if information from $2k-1$ cells is used in the ENO method, the order of accuracy is limited to $k$. This redundancy is alleviated in the WENO method, in which the reconstruction is based on a convex combination of all candidate ENO reconstructions. The weights in this combination are very small for approximations coming from stencils with discontinuities, and we can still guarantee order of accuracy $2k-1$ when the function is sufficiently smooth in all stencils. To be more specific, the reconstruction at the interface of cell $C_j$ has the form \[ w_{j+\frac{1}{2}} := \sum_{r=0}^{k-1} \omega_r w^{(r)}_{j+\frac{1}{2}} = w(x_{j+\frac{1}{2}}^-) + O(\Delta x^{2k-1}), \]
	where $\omega_r \propto \frac{d_r}{(\epsilon + \beta_r)^p}$ is normalized so that $\sum_r \omega_r = 1$. Weights $d_r$ are chosen to produce high-order reconstructions when $w$ is smooth, and $\beta_r$ is a smoothness indicator, such that $\beta_r \approx 0$ if $v$ is smooth in $S^{(r)}(j)$, and $\beta_r = O(1)$ otherwise. Following \cite{Shu1997}, we set $\epsilon$ to be $10^{-10}$ for our experiments, and $p=2$.
	
	As we work with a finite difference framework, a priori it is unclear if the reconstruction procedure outlined above can be applied in the current context. However, we are only interested in a high-order approximation of the spatial derivatives in \eqref{eq:momentum}, and not in a very accurate approximation of point values. Therefore, we can use the novel trick of Shu \cite{Shu1997} by which he converts reconstruction of cell averages into approximation of first derivatives. For the sake of completeness, we present the following proposition,
	\begin{proposition} \label{remark:wenoDeriv} 
		Given the point values of a smooth function $f$ on the cell centers of a uniform one-dimensional grid with step $\Delta x$, we can perform a WENO reconstruction to obtain \emph{numerical fluxes} $\hat{f}$ such that the spatial derivative of $f$ is approximated to a high-order of accuracy, i.e, 
		\begin{linenomath*}\begin{equation}
				\label{eq:wenoDiff} \frac{1}{\Delta x}(\hat{f}_{i + \frac{1}{2}} - \hat{f}_{i - \frac{1}{2}}) = f'(x_i) + O(\Delta x ^{k-1})
		\end{equation} \end{linenomath*}
	\end{proposition}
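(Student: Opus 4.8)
The plan is to carry out Shu's classical reduction of conservative finite differences to finite volumes \cite{Shu1997}: we manufacture an auxiliary function whose cell averages are exactly the given point values $f(x_i)$, apply to that function the reconstruction property recalled just above, and then read off the derivative approximation by a telescoping identity.

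\emph{Step 1 (the auxiliary function).} First I would introduce $h$, defined implicitly on the domain (shrunk by $\Delta x/2$ at each end, or extended periodically, so that the sliding average below is well defined) by
\begin{equation*}
f(x)=\frac{1}{\Delta x}\int_{x-\Delta x/2}^{\,x+\Delta x/2} h(\xi)\,d\xi .
\end{equation*}
For smooth $f$ this determines a smooth $h$; one never needs $h$ explicitly, only its existence and regularity on the relevant stencils. Two consequences are immediate. Evaluating the identity at $x=x_i$ shows that $f(x_i)=\bar h_i$, the cell average of $h$ over $C_i=[x_{i-1/2},x_{i+1/2})$. Differentiating the identity in $x$ and evaluating at $x=x_i$ (using $x_i\pm\Delta x/2=x_{i\pm1/2}$) gives the \emph{exact} telescoping relation
\begin{equation*}
f'(x_i)=\frac{1}{\Delta x}\bigl(h(x_{i+1/2})-h(x_{i-1/2})\bigr).
\end{equation*}

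\emph{Step 2 (reconstruct and telescope).} Next I would feed the data $\{f(x_i)\}_i$, now reinterpreted as the cell averages $\{\bar h_i\}_i$ of $h$, into the ENO (resp.\ WENO) reconstruction procedure described above, and \emph{define} $\hat f_{i+1/2}$ to be the resulting reconstructed interface value. Since $h$ is continuous, $h(x_{i+1/2}^-)=h(x_{i+1/2})$, so the reconstruction property gives $\hat f_{i+1/2}=h(x_{i+1/2})+O(\Delta x^{k})$ (and $O(\Delta x^{2k-1})$ in the WENO case). Subtracting the reconstructed values at $x_{i+1/2}$ and $x_{i-1/2}$, dividing by $\Delta x$, and invoking the exact relation from Step~1,
\begin{equation*}
\frac{1}{\Delta x}\bigl(\hat f_{i+1/2}-\hat f_{i-1/2}\bigr)=f'(x_i)+\frac{1}{\Delta x}\,O(\Delta x^{k})=f'(x_i)+O(\Delta x^{k-1}),
\end{equation*}
which is \eqref{eq:wenoDiff}.

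\emph{Main obstacle.} The genuinely delicate points are technical rather than conceptual. The first is justifying that $h$ exists as a smooth function with derivatives bounded uniformly in $\Delta x$ on each stencil, so that the reconstruction error estimate — stated for smooth functions — legitimately transfers to $h$; this can be done by a deconvolution/Neumann-series argument, or sidestepped by working directly with the polynomial exactness identities satisfied by the weights $c_{ri}$ and estimating the Taylor remainder. The second, relevant to the WENO variant, is that the smoothness indicators $\beta_r$ and nonlinear weights $\omega_r$ computed from $\{f(x_i)\}=\{\bar h_i\}$ still detect the regularity of $h$ correctly; since $f$ and $h$ differ only by averaging over a window of width $\Delta x$, smoothness of one on a stencil is inherited by the other on a slightly shrunk stencil, so the nonlinear weights behave as in the standard analysis and no order is lost. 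I would also note in passing the routine modifications at boundary cells (one-sided or periodic stencils), and that in the application to \eqref{eq:momentum} the flux is upwinded according to the sign of the transporting velocity — neither affects the formal order established here. Finally, I would remark that the bound $O(\Delta x^{k-1})$ is conservative: the standard finite-difference analysis shows that in regions where $h$ and the selected stencils are smooth the leading errors at adjacent interfaces cancel to higher order, so the reconstruction's full order is in fact attained for the derivative as well.
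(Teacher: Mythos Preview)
Your proposal is correct and follows essentially the same approach as the paper's proof: both introduce the auxiliary function $h$ defined implicitly by the sliding-average identity, observe that the point values $f(x_i)$ are the cell averages $\bar h_i$, apply the WENO reconstruction accuracy to obtain $\hat f_{i+1/2}=h(x_{i+1/2})+O(\Delta x^k)$, and conclude via the exact telescoping relation $f'(x_i)=\frac{1}{\Delta x}\bigl(h(x_{i+1/2})-h(x_{i-1/2})\bigr)$. Your additional remarks on the existence and uniform smoothness of $h$, the behavior of the nonlinear weights, and the conservativeness of the $O(\Delta x^{k-1})$ bound go beyond what the paper's proof records (the last point is addressed separately in the remark following the proposition), but the core argument is identical.
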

	
	\begin{proof}
		We follow Shu \cite{Shu1997} and seek a function $h$, depending on $\Delta x$, implicitly defined such that its sliding cell average is $f$. That is,
		\begin{equation}
			\label{eq:hdef}
			f(x) = \frac{1}{\Delta x} \int_{x - \frac{\Delta x}{2}}^{x + \frac{\Delta x}{2}} h(s) ds
		\end{equation} 
		A straightforward application of the fundamental theorem of calculus yields,
		
		\begin{equation}\label{eq:ftc} f'(x) = \frac{1}{\Delta x} \left( h\left(x + \frac{\Delta x}{2}\right) - h\left(x - \frac{\Delta x}{2}\right) \right) \end{equation}
		
		On the other hand, the cell averages of $h$ verify that:
		\[ \bar{h}_i := \frac{1}{\Delta x} \int_{x_i - \frac{\Delta x}{2}}^{x_i + \frac{\Delta x}{2}} h(s) ds \stackrel{\eqref{eq:hdef}}{=} f(x_i) \]
		Now, we can apply standard WENO reconstruction, as outlined above, to the cell averages $\bar{h}_i$ to obtain 
		\begin{equation} \hat{h}_{i+\frac{1}{2}} = h\left(x_i + \frac{\Delta x}{2}\right) + O(\Delta x^k) \label{eq:deriverrorterm} \end{equation}
		And finally, inserting in \eqref{eq:ftc},
		\[ f'(x_i) = \frac{1}{\Delta x} \left( \hat{h}_{i+\frac{1}{2}} - \hat{h}_{i-\frac{1}{2}} \right) + O(\Delta x^{k-1}) \]
		
		So it suffices to let $\hat{f}_{i+\frac{1}{2}} := \hat{h}_{i+\frac{1}{2}}$ and conclude the proof.

	\end{proof}
	\begin{remark}
	\rev{
	The accuracy of order $k-1$ in eq. \eqref{eq:wenoDiff} is a sharp bound. In practice however, one usually obtains a better, $k$-th order approximation to the spatial derivative, as numerical examples in section \ref{sec:4} show. If the error term $O(\Delta x^k)$ in eq. \eqref{eq:deriverrorterm} is smooth, its difference in eq. \eqref{eq:ftc} will be itself in $O(\Delta x^{k+1})$, producing an approximation to the derivative of order $k$. However, the smoothness of the error term requires the underlying function $h$ to be $k+1$ times continuously differentiable. In case $h$ is only $k$-times continuously differentiable, as required for the Taylor expansion in \eqref{eq:deriverrorterm}, then the approximation will only be of order $k-1$. Therefore, we will follow the slight abuse of notation, well established in the literature, of referring to these schemes as $(2k-1)$st-order accurate WENO schemes, even if for some cases, they only approximate the derivative to order $2k-2$.
	}
	\end{remark}
	\subsubsection{Further notation for WENO reconstructions.}
	Let $f: \mathbb{R}\to\mathbb{R}$ be a function, and let \[ \bar{f}_i := \frac{1}{\Delta x} \int_{x_{i-\frac{1}{2}}}^{x_{i+\frac{1}{2}}} f(x)dx,\] for $i\in I$ be its cell averages. Fix $k \in \mathbb{N}$, we term $W_{-\frac{1}{2}}, W_{+\frac{1}{2}}: \mathbb{R}^{2k-1} \to \mathbb{R}$ as the WENO-$(2k-1)$ reconstruction operators that provide approximations to the point values at the left and right cell edge respectively, which if $f$ is sufficiently regular are high-order accurate; i.e.:
	\[ W_{+\frac{1}{2}}( \bar{f}_{i-(k-1)}, \bar{f}_{i-(k-2)}, \dots, \bar{f}_i, \dots, \bar{f}_{i+(k-1)} ) = f(x_{i+\frac{1}{2}}^-) + O(\Delta x^{2k-1}) \]
	\[ W_{-\frac{1}{2}}( \bar{f}_{i-(k-1)}, \bar{f}_{i-(k-2)}, \dots, \bar{f}_i, \dots, \bar{f}_{i+(k-1)} ) = f(x_{i-\frac{1}{2}}^+) + O(\Delta x^{2k-1}) \]

	\subsection{The WENO scheme used by Pressel et al. \cite{Pressel2015,Pressel2017}}
	
	We start by describing our discretization of the advection terms in the momentum equation \eqref{eq:momentum}. To this end we assume uniform reference density and dynamic pressure $\rho_0,\,p^{\prime} \equiv 1$ and buoyancy $b\equiv 0$ in \eqref{eq:momentum} and write it
	down in two space dimensions as
	\begin{equation}
		\begin{aligned}
			\label{eq:simple_mom}
			\frac{\partial u}{\partial t} &= - \frac{\partial}{\partial x}(u^2) - \frac{\partial}{\partial y}(uv) , \\
			\frac{\partial v}{\partial t} &= - \frac{\partial}{\partial x}(uv) - \frac{\partial}{\partial y}(v^2).
		\end{aligned}
	\end{equation} 
	\rev{In particular, we will focus on a conservative WENO discretization of the first equation in \eqref{eq:simple_mom}. The choice of a conservative scheme is very natural: both physically, due to conservation of momentum, and mathematically, as for non-smooth flows $(u,v)$, terms such as $u v_y$ may not be well-defined, whereas the derivatives in this formulation are well-defined in a weak sense. Moreover, even if the flows are smooth but with large gradients, then, at the resolved scales, using non-conservative products might lead to larger numerical errors than the conservative form.} Following \cite{Pressel2015}, a semi-discrete WENO scheme for it can be written as 
	\begin{linenomath*}\begin{equation}\label{eq:discrete_simple_mom}
			\frac{\partial}{\partial t}u(x_i,y_{j+\frac{1}{2}},t) = -\frac{1}{\Delta x}\left(F^{u,x}_{i+\frac{1}{2}, j+\frac{1}{2}}(t) - F^{u,x}_{i - \frac{1}{2}, j+\frac{1}{2}}(t)\right)  - \frac{1}{\Delta y}\left(F^{u,y}_{i,j+1}(t) - F^{u,y}_{i,j}(t)\right).
	\end{equation}\end{linenomath*}
	Flux reconstructions $F$ (for the sake of clarity, we omit the time dependence throughout) are given by a generic reconstruction operator $\widehat{(\cdot)}$, to be described in the sequel:
	\begin{linenomath*}\begin{align*}
			F^{u,x}_{i+\frac{1}{2},j+\frac{1}{2}} &= \widehat{u^2}_{i+\frac{1}{2}, j+\frac{1}{2}}, \quad 
			F^{u,y}_{i,j} = \widehat{uv}_{i, j} .
	\end{align*}\end{linenomath*}
	The reconstruction $F_{i+\frac{1}{2},j+\frac{1}{2}}^{u,x}$ corresponds to the function $u^2$ along the $x$-direction. It can be computed directly by performing a WENO reconstruction from the point values
	$u^2_{i,j+\frac{1}{2}}$ in the $x$-direction.
	
	On the other hand, the reconstruction $F_{i,j}^{u,y}$ corresponds to approximating a $y$-spatial derivative of the function $uv$. This is problematic as the values of the velocities $u$ and $v$ are \emph{staggered} and
	are not defined at the same locations. In particular, $u$ is defined at points $(x_{i},y_{j+\frac{1}{2}})$ whereas $v$ is defined at points $(x_{i+\frac{1}{2}}, y_j)$. This situation is depicted in Figure \ref{fig:flux_recon}. Thus, we have to perform an interpolation procedure to be able to obtain point values for the
	function $uv$, which can then provide the input to a WENO reconstruction procedure for calculating $F^{u,y}_{i,j}$ in \eqref{eq:discrete_simple_mom}.
	
	\begin{figure}
		\begin{center}
			\begin{tikzpicture}
			
			\draw[step=2cm,gray,very thin] (-2.5,-1.7) grid (2.5,3.5);
			\draw[thick, line width=0.7mm] (0,0) rectangle (2,2);
			\node at (-2.2,0.2) {$y_{j}$};
			\node at (-2.4,2.2) {$y_{j+1}$};
			\node at (-2.4, -0.8) {$x_{i-1}$};
			\node at (-0.2, -0.8) {$x_{i}$};
			
			\draw[->] (-2.2, 1) -- (-1.8, 1);
			\draw[->] (-0.2, 1) -- (0.2, 1);
			\draw[->] (1.8, 1) -- (2.2, 1);
			\draw[->] (-1, -0.2) -- (-1, 0.2);
			\draw[->] (1, -0.2) -- (1, 0.2);
			\draw[->] (-1, 1.8) -- (-1, 2.2);
			\draw[->] (1, 1.8) -- (1, 2.2);
			
			\draw[->] (-0.2, 3) -- (0.2, 3);
			\draw[->] (-0.2, -1) -- (0.2, -1);
			
			\draw (0,2) circle[radius = 5pt];
			\draw (0,0) circle[radius = 5pt];
			\node at (0.6, 1.6) {$F^{u,y}_{i, j+1}$};
			\node at (0.4, -0.4) {$F^{u,y}_{i, j}$};
			\node at (0.6, 0.7) {$u_{i, j+\frac{1}{2}}$};
			\node at (1.5, 0.3) {$v_{i+\frac{1}{2}, j}$};
			\node at (-1, 2.4) {$v_{i-\frac{1}{2}, j+1}$};
			\node at (1.2, 2.4) {$v_{i+\frac{1}{2}, j+1}$};
			
			\end{tikzpicture}
			\caption{Reconstruction of $\frac{\partial}{\partial y}(uv)$ on a staggered grid. Circles denote points where fluxes need to be computed.} 
			\label{fig:flux_recon}
		\end{center}
	\end{figure}
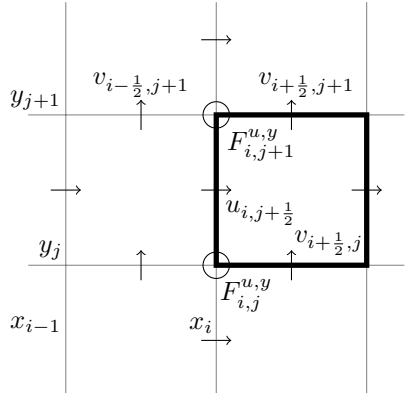
	One possible approach to reconstructing the $uv$ momentum flux, that has been used in applications (e.g. Pressel et al. \cite{Pressel2015}\footnote{In \cite{Pressel2015}, this idea of interpolating the advecting velocity is also used for the reconstruction of the $(u_i)^2$ fluxes}, Skamarock et al. \cite{Skamarock2008WRF}), is to use a high-order one-dimensional central interpolation to reconstruct the advecting velocity $v$, and WENO interpolation to reconstruct the velocity $u$. This procedure can be summarized as:
	
	\begin{algorithm}
		\label{algo:pycles}
		~\textbf{Goal}: Given $k \in \mathbb{N}$, find a reconstruction of the numerical flux, $F^{u,y}_{i,j+1}$, such that, formally, $\forall i,j$: 
		\begin{equation}\label{eq:unmetgoal} \frac{1}{\Delta y}({F^{u,y}_{i, j+1} - F^{u,y}_{i,j}}) = \frac{\partial}{\partial y} (uv) (x_{i}, y_{j+\frac{1}{2}}) + O(\Delta x^{2k-1} + \Delta y^{2k-1}) \end{equation}
		\begin{enumerate}
			\item Using 1D central interpolation 
			on the horizontal, symmetric, $2k$-point stencil
			$\{ v_{i-\frac{1}{2} + l,~j+1} \}_{l=-k+1}^{k}$, compute
			\begin{linenomath*}\begin{equation*} 
					\tilde{v}_{i,j+1} = v(x_i, y_{j+1}) + O(\Delta x^{2k})
			\end{equation*}\end{linenomath*}
			
			\item Compute the two possible biased, vertical $(2k-1)$-WENO reconstructions of $u$ at $(x_i, y_{j+1})$:
			\begin{align*} \hat{u}_{i,j+1}^- &:= W_{+\frac{1}{2}}(u_{i, j+\frac{1}{2}+(-k+1)}, u_{i, j+\frac{1}{2}+(-k+2)}, \dots, u_{i, j+\frac{1}{2}+k-1}) \\
				\hat{u}_{i,j+1}^+ &:= W_{-\frac{1}{2}}(u_{i, j+\frac{1}{2}+(-k+2)}, u_{i, j+\frac{1}{2}+(-k+3)}, \dots, u_{i, j+\frac{1}{2}+k})
			\end{align*}
			
			\item Upwind with the sign of $\tilde{v}_{i,j+1}$: 
			\begin{linenomath*}\begin{equation*}
					\hat{u}_{i,j+1} = \begin{cases}
						\hat{u}_{i,j+1}^+ &\mbox{if } \tilde{v}_{i,j+1} < 0 \\
						\hat{u}_{i,j+1}^- &\mbox{if } \tilde{v}_{i,j+1} \ge 0 \\
					\end{cases}
			\end{equation*}\end{linenomath*}
			
			\item Set $ F^{u,y}_{i,j+1} := \tilde{v}_{i,j+1} \hat{u}_{i,j+1} $
		\end{enumerate}
	\end{algorithm}
	
	This completes the description of the scheme \eqref{eq:discrete_simple_mom}. We observe that one can discretize the second equation in \eqref{eq:simple_mom} in an analogous fashion. In particular, the WENO reconstruction of $\partial_y (v^2)$ is
	straightforward and Algorithm \ref{algo:pycles} can be readily modified to yield a WENO reconstruction of $\partial_x (uv)$. 	
	\subsubsection{Order of Accuracy}
	\label{sec:order}
	In this section, we will investigate the (formal) order of accuracy of the WENO scheme described in \cite{Pressel2015}. This order of accuracy is measured in terms of the truncation error of the scheme. We have the following theorem,
	\begin{theorem}
		\label{theorem}
		For $u$, $v$ sufficiently smooth and for any $2 \leq k \in \mathbb{N}$, Algorithm \ref{algo:pycles} produces an at most second order approximation to $\frac{\partial}{\partial y}(uv) (x_i, y_{j+\frac{1}{2}})$ in \eqref{eq:simple_mom}. Hence, the WENO schemes used by \cite{Pressel2015} are at most second order accurate, and equation \eqref{eq:unmetgoal} does not hold for them.
	\end{theorem}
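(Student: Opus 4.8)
The plan is to follow the truncation error of the four‑step reconstruction in Algorithm~\ref{algo:pycles} through each step, isolate a generic $O(\Delta y^2)$ contribution that no increase of the (ENO/WENO) order can remove, and then make the failure of \eqref{eq:unmetgoal} rigorous by exhibiting an explicit pair $(u,v)$ for which every step is exact. Since $u,v$ are smooth, all smoothness indicators of the vertical WENO reconstruction in Step~2 are $O(1)$‑comparable, so the two biased reconstructions $\hat u^{\pm}_{i,j+1}$ approximate the same interface value with the full $O(\Delta y^{2k-1})$ accuracy and the upwind choice in Step~3 is immaterial; the $2k$‑point central interpolation of Step~1 gives $\tilde v_{i,j+1}=v(x_i,y_{j+1})+O(\Delta x^{2k})$. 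The crucial observation, in the language of Proposition~\ref{remark:wenoDeriv}, is that the finite‑difference WENO step does not return point values of $u$ but of the auxiliary function $h^{u}$ defined by $u(x_i,y)=\frac1{\Delta y}\int_{y-\Delta y/2}^{y+\Delta y/2}h^{u}(x_i,s)\,ds$ as in \eqref{eq:hdef}; thus $\hat u_{i,j+1}=h^{u}(x_i,y_{j+1})+O(\Delta y^{2k-1})$ and Step~4 produces $F^{u,y}_{i,j+1}=v(x_i,y_{j+1})\,h^{u}(x_i,y_{j+1})+O(\Delta x^{2k}+\Delta y^{2k-1})$.

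The next step is to compare $F^{u,y}_{i,j+1}$ with the flux a genuinely high‑order conservative scheme must build, namely the auxiliary function $g$ of the \emph{product}, $(uv)(x_i,y)=\frac1{\Delta y}\int_{y-\Delta y/2}^{y+\Delta y/2}g(x_i,s)\,ds$, which by construction satisfies the exact identity $\frac1{\Delta y}\big(g(x_i,y_{j+1})-g(x_i,y_j)\big)=\partial_y(uv)(x_i,y_{j+1/2})$. Inverting the cell‑average operator gives $h^{u}=u-\frac{\Delta y^2}{24}\partial_y^2 u+O(\Delta y^4)$ and $g=uv-\frac{\Delta y^2}{24}\partial_y^2(uv)+O(\Delta y^4)$, so the mismatch is
\[
v\,h^{u}-g=\frac{\Delta y^2}{24}\bigl(2u_yv_y+uv_{yy}\bigr)+O(\Delta y^4),
\]
evaluated at $(x_i,y_{j+1})$. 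Forming the divided difference, using the exact identity for $g$ and a midpoint expansion of this $O(\Delta y^2)$ term about $y_{j+1/2}$, I obtain
\[
\frac{F^{u,y}_{i,j+1}-F^{u,y}_{i,j}}{\Delta y}=\partial_y(uv)(x_i,y_{j+1/2})+\frac{\Delta y^2}{24}\,\partial_y\bigl(2u_yv_y+uv_{yy}\bigr)(x_i,y_{j+1/2})+O\bigl(\Delta y^3+\Delta y^{2k-2}\bigr)
\]
(under the usual $\Delta x\sim\Delta y$ scaling), so for every $k\ge2$ the truncation error is $O(\Delta y^2)$ and, for $k\ge3$, its leading coefficient is exactly $\tfrac1{24}\partial_y(2u_yv_y+uv_{yy})$, a nontrivial differential expression. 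To obtain a clean ``at most second order'' statement valid for all $k\ge2$ simultaneously, and to avoid the $O(\Delta y^3)$ corrections coming from the nonlinear WENO weights when $k=2$, I would specialise to $u(x,y)=y$, $v(x,y)=y^2$: then $h^{u}=y$ is linear, so its WENO reconstruction is exact (every ENO substencil reproduces a linear function) and $\hat u_{i,j+1}=y_{j+1}$; the $2k$‑point central interpolation reproduces the quadratic $v$ exactly, so $\tilde v_{i,j+1}=y_{j+1}^2$; hence $F^{u,y}_{i,j+1}=y_{j+1}^3$ exactly, and $\frac1{\Delta y}(y_{j+1}^3-y_j^3)=3y_{j+1/2}^2+\frac{\Delta y^2}{4}$, whereas $\partial_y(uv)(x_i,y_{j+1/2})=3y_{j+1/2}^2$. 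The truncation error is therefore exactly $\Delta y^2/4\neq0$ for every $k\ge2$, which contradicts \eqref{eq:unmetgoal} and proves the claim.

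The conceptual heart of the proof --- and the main obstacle --- is the recognition that the algorithm forms $\tilde v\cdot\hat u\approx v\cdot h^{u}$, the \emph{product of the deconvolved factors}, whereas a conservative high‑order flux must approximate $g$, the \emph{deconvolution of the product} $uv$; since averaging (hence deconvolution) does not commute with products, these differ at $O(\Delta y^2)$, and one still has to verify that this defect survives the divided difference rather than telescoping away --- which the explicit polynomial example settles unambiguously. The remaining points are routine: justifying that the Step~3 case split is harmless in the smooth regime, that the residual WENO and central‑interpolation errors are of strictly higher order for $k\ge3$, and, for $k=2$, that they vanish identically for the chosen polynomial data so cannot cancel the mismatch.
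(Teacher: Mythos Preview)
Your argument is correct and takes a genuinely different route from the paper's proof. The paper proceeds by an algebraic add--subtract trick: it inserts $v(x_i,y_{j+1/2})(\hat u_{i,j+1}+\hat u_{i,j})/\Delta y$ to split $D^{u,y}_{i,j+1/2}$ into a term approximating $v\,\partial_y u$ (which it shows is high order via Proposition~\ref{remark:wenoDeriv}) and a remainder approximating $u\,\partial_y v$; the latter is reduced, through Taylor expansions of $v$ and a trapezoidal--rule estimate for $\tfrac12(\hat u_{i,j+1}+\hat u_{i,j})$, to $u\,\partial_y v + O(\Delta y^2)$. This establishes the $O(\Delta y^2)$ upper bound but does not isolate the leading constant, nor does it exhibit data for which the error is \emph{exactly} second order.

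Your approach instead frames the defect as a commutator: Algorithm~\ref{algo:pycles} effectively forms $v\cdot h^{u}$, the product of a point value with the deconvolved factor, whereas a genuinely high--order conservative flux needs $g$, the deconvolution of the product $uv$. Expanding both deconvolutions to $O(\Delta y^4)$ yields the explicit leading term $\tfrac{1}{24}\partial_y(2u_yv_y+uv_{yy})\,\Delta y^2$, and your polynomial witness $u=y$, $v=y^2$ turns every interpolation and reconstruction step into an exact identity, producing a truncation error of exactly $\Delta y^2/4$ for all $k\ge 2$ (consistent with your general coefficient, since $\tfrac{1}{24}\partial_y(6y)=\tfrac14$). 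This buys you two things the paper's proof does not: a closed--form leading error constant, and a rigorous lower bound that makes the phrase ``at most second order'' sharp rather than merely an upper estimate. The paper's route, on the other hand, is slightly more elementary in that it never needs to invert the sliding--average operator or appeal to the auxiliary function $g$, and it stays closer to the WENO machinery already introduced.
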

	\begin{proof}
		
		We assume $\Delta x \propto \Delta y$, and so $O(\Delta x) = O(\Delta y)$. Following Algorithm \ref{algo:pycles}, the approximation to $\partial_y (uv) (x_i, y_{j+\frac{1}{2}}) $ is:
		\begin{linenomath*}\begin{equation*}
				D^{u,y}_{i,j+\frac{1}{2}} := \frac{1}{\Delta y} \left(\tilde{v}_{i,j+1} \hat{u}_{i, j+1} - \tilde{v}_{i,j} \hat{u}_{i,j}  \right)
		\end{equation*}\end{linenomath*}
		
		where $\hat{u}$ are the reconstructions of $u$ such that, by \eqref{eq:wenoDiff},    
		\begin{linenomath*}\begin{equation*} 
				\frac{1}{\Delta y} (\hat{u}_{i,j+1} - \hat{u}_{i,j}) = \partial_y u(x_i, y_{j+\frac{1}{2}}) + O(\Delta y^{2k-1}),
		\end{equation*}\end{linenomath*}and $\tilde{v}_{i, m}$ is the horizontally interpolated value for $v$ at $(x_i, y_{m})$, $m \in \{j, j+1\}$, as per step 1 in Algorithm \ref{algo:pycles}:
		\begin{linenomath*}\begin{equation*}
				\tilde{v}_{i,m} =  v(x_i, y_{m}) + O(\Delta x^{2k})
		\end{equation*}\end{linenomath*}

		We will show that $D^{u,y}_{i,j+\frac{1}{2}} = \partial_y (uv) + O(\Delta y^2)$. 
		
		Denote by $v_{i,j+\frac{1}{2}} := v(x_i, y_{j+\frac{1}{2}})$ the exact point value of $v$. Adding and subtracting $v_{i,j+\frac{1}{2}} (\hat{u}_{i,j+1} + \hat{u}_{i,j}) / \Delta y$ and rearranging:
		\begin{linenomath*}\begin{equation} \label{eq:Dlong}
				D^{u,y}_{i,j+\frac{1}{2}} = \frac{\tilde{v}_{i,j+1} - v_{i,j+\frac{1}{2}}}{\Delta y} \hat{u}_{i, j+1}  + \frac{v_{i,j+\frac{1}{2}} - \tilde{v}_{i,j}}{\Delta y} \hat{u}_{i, j} + v_{i,j+\frac{1}{2}} \frac{\hat{u}_{i, j+1} - \hat{u}_{i, j}}{\Delta y}
		\end{equation}\end{linenomath*}
		
		We observe that, by \eqref{eq:wenoDiff}, the last term in the right hand side of \eqref{eq:Dlong} is a high-order $O(\Delta x^{2k-1})$ approximation to $\left(v\,\partial_y u\right) (x_i, y_{j+\frac{1}{2}})$. 
		
		Next, we will show that the other terms in \eqref{eq:Dlong}, which approximate $u\,\partial_y v$, produce a \emph{lower-order} approximation. Using Taylor expansions and \eqref{eq:wenoDiff},
		\begin{align*}
			\frac{\tilde{v}_{i,j+1} - v_{i,j+\frac{1}{2}}}{\Delta y} & \hat{u}_{i, j+1} + \frac{v_{i,j+\frac{1}{2}} - \tilde{v}_{i,j}}{\Delta y} \hat{u}_{i, j} = \\
			& = \frac{v_{i,j+1} - v_{i,j+\frac{1}{2}}}{\Delta y} \hat{u}_{i, j+1} + \frac{v_{i,j+\frac{1}{2}} - v_{i,j}}{\Delta y} \hat{u}_{i, j} + O(\Delta y^{2k-1})\ \\
			&= \left( \frac{\partial_y v(x_i, y_{j+\frac{1}{2}})}{2} + \partial_{yy} v(x_i, y_{j+\frac{1}{2}}) \frac{\Delta y}{8} + O(\Delta y^2)\right) \hat{u}_{i, j+1}\ \\
			&\qquad + \left( \frac{\partial_y v(x_i, y_{j+\frac{1}{2}})}{2} - \partial_{yy} v(x_i, y_{j+\frac{1}{2}}) \frac{\Delta y}{8} + O(\Delta y^2) \right) \hat{u}_{i, j} + O(\Delta y^{2k-1})\ \\
			&= \partial_y v(x_i, y_{j+\frac{1}{2}}) \frac{\hat{u}_{i, j+1} + \hat{u}_{i, j}}{2} + \frac{\partial_{yy} v(x_i, y_{j+\frac{1}{2}})}{8} \frac{\hat{u}_{i,j+1} - \hat{u}_{i,j}}{\Delta y}\Delta y^2 + O(\Delta y^2)\\
			&= \partial_y v(x_i, y_{j+\frac{1}{2}}) \frac{\hat{u}_{i, j+1} + \hat{u}_{i, j}}{2} + \frac{ \left(\partial_{yy} v\ \partial_y u\right) (x_i, y_{j+\frac{1}{2}})}{8} \Delta y^2 + O(\Delta y^2) \\
			&= \partial_y v(x_i, y_{j+\frac{1}{2}}) \frac{\hat{u}_{i, j+1} + \hat{u}_{i, j}}{2} + O(\Delta y^2)
		\end{align*}
		
		For the first term, considering the 1D function $f_i(\cdot) = u(x_i, \cdot)$, and naming $h_{i}(\cdot)$ its 1D WENO reconstruction function as in Proposition \ref{remark:wenoDeriv}, we have that:
		\begin{align} \frac{\hat{u}_{i, j+1} + \hat{u}_{i, j}}{2} &= \frac{1}{\Delta y} \Delta y \frac{h_{i}(y_{j+1}) + h_{i}(y_j)}{2} + O(\Delta y^{2k-1}) \\
			& = \frac{1}{\Delta y} \left( \int_{y_j}^{y_{j+1}} h_{i}(t)~dt + O(\Delta y^3) \right) + O(\Delta y^{2k-1}) \\
			& = u(x_i, y_{j+\frac{1}{2}}) + O(\Delta y^2)
		\end{align}
		Where in the second step we have used the accuracy of the trapezoidal quadrature rule. Finally,
		
		\begin{linenomath*}\begin{equation*}  \frac{\hat{u}_{i, j+1} + \hat{u}_{i, j}}{2} \partial_y v(x_i, y_{j+\frac{1}{2}})= (u\,\partial_y v)(x_i, y_{j+\frac{1}{2}}) + O(\Delta y^2)
		\end{equation*}\end{linenomath*}
		
		Summarizing,
		\begin{linenomath*}	\begin{align*}
				\frac{1}{\Delta y} \left(\tilde{v}_{i,j+1} \hat{u}_{i, j+1} - \tilde{v}_{i,j} \hat{u}_{i,j}  \right) & = \left( (v\,\partial_y u)(x_i, y_{j+\frac{1}{2}}) + O(\Delta y^{2k-1}) \right) + \left( (u\,\partial_y v)(x_i, y_{j+\frac{1}{2}}) + O(\Delta y^2) \right) \\ 
				& = \partial_y (uv) + O(\Delta y^2) \label{eq:qvs2}
		\end{align*}\end{linenomath*}
		
	\end{proof}
	An analogous treatment of the WENO discretization of the $\partial_x (uv)$ term in the second equation of \eqref{eq:simple_mom} also yields an at most second order accuracy of this term. 
	\begin{remark}
		The surprising aspect of Theorem \ref{theorem} is the fact that the formal order of accuracy of the WENO scheme is at most $2$, even if the underlying interpolating polynomials are of arbitrarily high order. As the above proof shows, this can be attributed to the errors that accumulate while interpolating values on the staggered grid. A similar and unexpected loss of convergence is also found for some popular central schemes for anelastic flows, for instance the Wicker-Skamarock scheme presented in \cite{Skamarock2008WRF}), see Appendix \ref{appendixcentral} for details.
	\end{remark}
	\section{An arbitrarily high-order WENO scheme on staggered grids}
	\label{sec:3}
	Given the fact that existing WENO schemes for discretizing the anelastic equations \eqref{eq:momentum} are at most second order accurate, we will present an arbitrarily high-order WENO scheme on such grids. Again for the simplicity of notation and exposition, we will focus on the two-dimensional problem while remarking that the extension to three space dimensions is straightforward.

	\subsection{Interpolation}
	\label{enointerp}
	
	As identified in the previous section, the reason for the loss of accuracy of existing WENO schemes is the error accumulated while interpolating across the staggered grid. In particular, one-dimensional (central) interpolations cause a loss of order of accuracy. A genuinely multi-dimensional interpolation can potentially restore design order of accuracy. Similar ideas were explored in Ghosh and Baeder \cite{Ghosh2012} and Zhang and Jackson \cite{Zhang2009}. In both these papers, the authors consider a non-conservative formulation of the momentum equations that necessitates a genuinely multi-dimensional (central) interpolation. Here, we will retain the conservative version of the momentum equations and design a two-dimensional ENO interpolation, in order to be consistent with the non-oscillatory character of the overall scheme. This increases the computational cost of the scheme, albeit the overall cost is still in the same order as the WENO reconstructions. 
	
	We illustrate the algorithm by describing the discretization of the \emph{problematic} $\partial_y (uv)$ term in the first equation of \eqref{eq:simple_mom}. Recalling the issue from section \ref{sec:2} and as depicted in Figure \ref{fig:2dinterp}, we have to obtain values of $uv$ at a set of points in order to provide a WENO reconstruction of the derivative $\partial_y (uv)$. In particular, we need to find approximations to the values of $v$ at the points $(x_i,y_{j+\frac{1}{2}})$ where values of $u$ are defined. There are two possible choices, as depicted in Figure \ref{fig:2dinterp}. Either we perform a one-dimensional ENO interpolation of $v$ in the $y$-direction to define the point values $v_{i+\frac{1}{2},j+\frac{1}{2}}$ (at the cell centers) and then perform a one-dimensional ENO interpolation of $v_{i+\frac{1}{2},j+\frac{1}{2}}$ along the $x$-direction to obtain the values of $v$ at the points $(x_i,y_{j+\frac{1}{2}})$ (this procedure is shown by solid curves in Figure \ref{fig:2dinterp}) or we perform a one-dimensional ENO interpolation of $v$ in the $x$-direction to define the point values $v_{i,j}$ and then perform a one-dimensional ENO interpolation of $v_{i,j}$ along the $y$-direction to obtain the values of $v$ at the points $(x_i,y_{j+\frac{1}{2}})$ (this procedure is shown by dashed curves in Figure \ref{fig:2dinterp}). In the sequel, we will require the values of the velocity field at cell centers. Therefore, we do the former and summarize the interpolation algorithm below,
	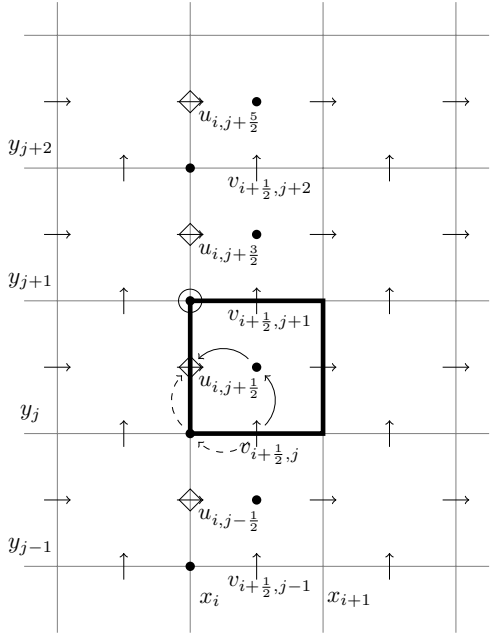
\begin{figure}
		\begin{center}
			\resizebox{0.4\textwidth}{!}{
				\begin{tikzpicture}
				
				\draw[step=2cm,gray,very thin] (-2.5, -1) grid (4.5,8.5);
				\draw[thick, line width=0.7mm] (0,2) rectangle (2,4);
				\draw[->] (-1,-0.2) -- (-1, 0.2);
				\draw[->] (-1,1.8) -- (-1, 2.2);
				\draw[->] (-1,3.8) -- (-1,4.2);
				\draw[->] (-1,5.8) -- (-1,6.2);
				\draw[->] (1,-0.2) -- (1, 0.2);
				\draw[->] (1,1.8) -- (1, 2.2);
				\draw[->] (1,3.8) -- (1,4.2);
				\draw[->] (1,5.8) -- (1,6.2);
				\draw[->] (3,-0.2) -- (3, 0.2);
				\draw[->] (3,1.8) -- (3, 2.2);
				\draw[->] (3,3.8) -- (3,4.2);
				\draw[->] (3,5.8) -- (3,6.2);
				
				\draw[->] (-2.2, 1) -- (-1.8,1);
				\draw[->] (-2.2, 3) -- (-1.8, 3);
				\draw[->] (-2.2,5) -- (-1.8,5);
				\draw[->] (-2.2, 7) -- (-1.8, 7);
				\draw[->] (-0.2, 1) -- (0.2,1);
				\draw[->] (-0.2, 3) -- (0.2, 3);
				\draw[->] (-0.2,5) -- (0.2,5);
				\draw[->] (-0.2, 7) -- (0.2, 7);
				\draw[->] (1.8, 1) -- (2.2,1);
				\draw[->] (1.8, 3) -- (2.2, 3);
				\draw[->] (1.8,5) -- (2.2,5);
				\draw[->] (1.8, 7) -- (2.2, 7);
				\draw[->] (3.8, 1) -- (4.2,1);
				\draw[->] (3.8, 3) -- (4.2, 3);
				\draw[->] (3.8,5) -- (4.2,5);
				\draw[->] (3.8, 7) -- (4.2, 7);
				
				\fill (1,1) circle[radius=2pt];
				\fill (1,3) circle[radius=2pt];
				\fill (1,5) circle[radius=2pt];
				\fill (1,7) circle[radius=2pt];
				\fill (0,0) circle[radius=2pt];
				\fill (0,2) circle[radius=2pt];
				\fill (0,4) circle[radius=2pt];
				\fill (0,6) circle[radius=2pt];
				\node at (0.3,-0.5) {$x_{i}$};
				\node at (2.4,-0.5) {$x_{i+1}$};
				\node at (-2.4, 0.3 ) {$y_{j-1}$};
				\node at (-2.4, 2.3) {$y_{j}$};
				\node at (-2.4,4.3) {$y_{j+1}$};
				\node at (-2.4,6.3) {$y_{j+2}$};
				
				\node at (1.2, -0.3) {$v_{i+\frac{1}{2}, j-1}$};
				\node at (1.2, 1.7) {$v_{i+\frac{1}{2}, j}$};
				\node at (1.2, 3.7) {$v_{i+\frac{1}{2}, j+1}$};
				\node at (1.2, 5.7) {$v_{i+\frac{1}{2}, j+2}$};
				
				\node at (0.6, 0.7) {$u_{i,j-\frac{1}{2}}$};
				\node at (0.6, 2.7) {$u_{i,j+\frac{1}{2}}$};
				\node at (0.6, 4.7) {$u_{i, j+\frac{3}{2}}$};
				\node at (0.6, 6.7) {$u_{i,j+\frac{5}{2}}$};
				
				\node[rectangle,draw,xscale=1, yscale=1,rotate=45] at (0,1) {};
				\node[rectangle,draw,xscale=1, yscale=1,rotate=45] at (0,3) {};
				\node[rectangle,draw,xscale=1, yscale=1,rotate=45] at (0,5) {};
				\node[rectangle,draw,xscale=1, yscale=1,rotate=45] at (0,7) {};
				
				\draw (0, 4) circle[radius=5pt];
				
				\node at (1,2) (1) {};
				\node at (0,2) (2) {};
				\node at (0,3) (4) {};
				\node at (1,3) (3) {};
				
				\draw (1) edge[bend left=45,dashed,->] (2);
				\draw (2) edge[bend left=45,dashed,->] (4);
				\draw (1) edge[bend right=45,->] (3);
				\draw (3) edge[bend right=45,->] (4);
				
				\end{tikzpicture}}
		\end{center}
		\caption{2D interpolation for $\frac{\partial}{\partial y}(uv)$ on a staggered mesh. The two candidate 2-step 1D interpolations are depicted with solid and dashed curved arrows.}
		\label{fig:2dinterp}
	\end{figure}

	\begin{algorithm}
		\label{algo:2dinterp}
		~\textbf{Goal:} Given point values of $v_{i+\frac{1}{2},j} \approx v(x_{i+\frac{1}{2}}, y_j)$, $\forall i,j$, find non-oscillatory, order $2k-1$ interpolations of $v$, at the locations $(x_i, y_{j+\frac{1}{2}})$.
		\begin{enumerate}
			\item \label{step:intermediaterec}For each cell, use 1D vertical ENO interpolation to find an approximation to $v$ at the cell center, $\tilde{v}_{i+\frac{1}{2}, j+\frac{1}{2}}$, based on the stencil of maximal smoothness of $2k-1$ cells contained in
			$\{v_{i+\frac{1}{2}, j+r} \}_{r = -(2k-3)}^{2k-2}$
			\[  \tilde{v}_{i+\frac{1}{2}, j+\frac{1}{2}} = v(x_{i+\frac{1}{2}}, y_{j+\frac{1}{2}}) + O(\Delta y^{2k-1}) \]
			
			\item For each cell, use 1D horizontal ENO interpolation to find an approximation to $v$ at the location of $u$, denote it $\tilde{v}_{i,j+\frac{1}{2}}$, based on the stencil of maximal smoothness of $2k-1$ cells contained in
			$\{ \tilde{v}_{i+\frac{1}{2}+r, j+\frac{1}{2}} \}_{r = -(2k-2)}^{2k-3}$
			\[ \tilde{v}_{i,j+\frac{1}{2}} = v(x_{i}, y_{j+\frac{1}{2}}) + O(\Delta x^{2k-1} + \Delta y^{2k-1})\]
		\end{enumerate}
	\end{algorithm}
	
	\begin{remark}
		The algorithm above can be readily extended to discretize the $\partial_x (uv)$ term in the second equation of \eqref{eq:simple_mom}. This procedure applies almost verbatim to the 3D scenario, the only difference being that a higher number of interpolations, along each of three directions, are required. 
	\end{remark}
	
	\begin{remark} 
		In Algorithm \ref{algo:2dinterp}, we choose to apply 1D ENO interpolation, rather than WENO; this choice is made for stability. \rev{Near sharp gradients, the scheme displays some sensitivity to the reconstruction stencil: at each cell interface, WENO produces two candidate reconstructions, left- and right-biased. We find that a naive upwind choice for this interpolation occasionally produces spurious oscillations. Experimentally, we have found that the best results are produced with ENO, with a stencil that contains at least one node on each side of the point where interpolation is required; this makes the selection unbiased near sharp gradients.} The stencils described above already enforce this requirement. E.g. for step 1, all stencils of $2k-1$ cells contain $v_{i+\frac{1}{2},j}$ and $v_{i+\frac{1}{2},j+1}$.
	\end{remark}
	
	\subsection{Numerical fluxes}
	In this section, we will describe the numerical fluxes for discretizing the advective terms in \eqref{eq:momentum}. As the preceding discussion indicates, we need to differentiate between two sets of terms, namely $\frac{\partial}{\partial x_i} (\rho_0 u_i^2)$, and $\frac{\partial}{\partial x_j} (\rho_0 u_i u_j)$ for $i \neq j$.
	\subsubsection{Terms of the form $\frac{\partial}{\partial x_i}(\rho_0 u_i^2)$}
	\label{1dfluxes}
	
	For illustration, we describe the flux for $\frac{\partial}{\partial x}(\rho_0 u^2)$ in two space dimensions. This configuration is displayed in Figure \ref{fig:uux}, where the diamond marks the position of $u$ in the highlighted cell, and the circles are at the points where fluxes need to be reconstructed.
	
	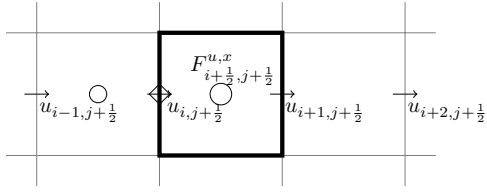
\begin{figure}
		\begin{center}
			\resizebox{0.4\textwidth}{!}{
				\begin{tikzpicture}
				
				\draw[step=2cm,gray,very thin] (-0.5, -0.5) grid (6.5, 2.5);
				\draw[thick, line width=0.7mm] (2,0) rectangle (4,2);
				\draw[->] (-0.2,1) -- (0.2,1);
				\draw[->] (1.8,1) -- (2.2,1);
				\draw[->] (3.8,1) -- (4.2,1);
				\draw[->] (5.8,1) -- (6.2,1);
				\node[rectangle,draw,xscale=1, yscale=1,rotate=45] at (2,1) {};
				\draw (3,1) circle[radius = 5pt];
				\draw (1,1) circle[radius = 4pt];
				\node at (0.7, 0.7) {$u_{i-1, j+\frac{1}{2}}$};
				\node at (2.6, 0.7) {$u_{i, j+\frac{1}{2}}$};
				\node at (4.7, 0.7) {$u_{i+1, j+\frac{1}{2}}$};
				\node at (6.7, 0.7) {$u_{i+2, j+\frac{1}{2}}$};
				\node at (3.2, 1.4) {$F^{u,x}_{i+\frac{1}{2},j+\frac{1}{2}}$};
				\end{tikzpicture}}
		\end{center}
		\caption{Reconstruction of $\frac{\partial}{\partial x} u^2$ on a staggered grid. The diamond marks the location of $u$ in the highlighted cell; numerical fluxes must be calculated at the circles.}
		\label{fig:uux}
	\end{figure}

	\begin{algorithm}
		\label{algo:squaredfluxes}
		~\textbf{Goal:} Find a numerical flux $F^{u,x}$ such that, formally, $\forall i,j$:
		
		\begin{linenomath*}\begin{equation*}
				\frac{F^{u,x}_{i+\frac{1}{2}, j+\frac{1}{2}} - F^{u,x}_{i-\frac{1}{2},j+\frac{1}{2}}}{\Delta x} = \frac{\partial (\rho_0 u^2)}{\partial x} (x_i, y_{j+\frac{1}{2}}) + O(\Delta x^{2k-1} + \Delta y^{2k-1})
		\end{equation*}\end{linenomath*}

		\begin{enumerate}
			\item Compute the two biased WENO reconstructions of the flux function $f(x) = \left(\rho_0 u^2\right) (x,y_{j+\frac{1}{2}})$ at $(x_{i+\frac{1}{2}}, y_{j+\frac{1}{2}})$. Denoting $f_l :=  \rho_0(x_{l}, y_{j+\frac{1}{2}})~ u_{l, j+\frac{1}{2}}^2$, this is: 
			\begin{align*} \hat{f}_{i+\frac{1}{2}, j+\frac{1}{2}}^- & = W_{+\frac{1}{2}}( f_{i-(k-1)}, f_{i-(k-2)}, \dots, f_{i+(k-1)} ) \\
				\hat{f}_{i+\frac{1}{2}, j+\frac{1}{2}}^+ & = W_{-\frac{1}{2}}( f_{i-(k-2)}, f_{i-(k-1)}, \dots, f_{i+k} ) 
			\end{align*}
			
			\item Choose a consistent numerical flux function $G(u^-, u^+)$, and set
			\[ F^{u,x}_{i+\frac{1}{2}, j+\frac{1}{2}} := G(\hat{f}_{i+\frac{1}{2}, j+\frac{1}{2}}^-, \hat{f}_{i+\frac{1}{2}, j+\frac{1}{2}}^+) \]
			
		\end{enumerate}
	\end{algorithm}
	
	\begin{remark}
		
		In the numerical experiments in the sequel, $G$ is taken to be the upwind flux:
		\[ G(u_l,u_r) = \begin{cases}
		u_l &\mbox{if } a_{i + \frac{1}{2}, j + \frac{1}{2}} \ge 0 \\
		u_r &\mbox{if } a_{i + \frac{1}{2}, j + \frac{1}{2}} < 0
		\end{cases}; \qquad a_{i + \frac{1}{2}, j + \frac{1}{2}} = \dfrac{u_{i,j+\frac{1}{2}} + u_{i+1,j+\frac{1}{2}}}{2}.
		\]
		Other fluxes, e.g. Lax-Friedrichs, Rusanov, etc. can also be considered. 
	\end{remark}
	
	\subsubsection{Terms of the form $\frac{\partial}{\partial x_j} (\rho_0 u_i u_j)$, with $i \neq j$}
	\label{crossfluxes}
	Again for the purpose of illustration, we focus on the two-dimensional discretization of the term $\frac{\partial}{\partial y} (\rho_0 uv)$. As discussed before, this computation requires interpolations of $v$ at the points where $u$ lies; i.e. this algorithm assumes the availability in each cell of interpolations \[ \tilde{v}_{i, j+\frac{1}{2}+m} := v(x_{i}, y_{j+\frac{1}{2}+m}) + O(\Delta x^{2k-1} + \Delta y^{2k-1}),\quad \text{ for } m \in \{-(k-1), \dots, k\},\] which can be computed with Algorithm \ref{algo:2dinterp}. This is depicted in Figure \ref{fig:uvx}, where the diamond marks the position of $v$ in the highlighted cell; and the circles are at the locations where fluxes need to be computed.

	\begin{figure}
		\begin{center}
			\resizebox{0.18\textwidth}{!}{
				\begin{tikzpicture}
				\draw[step=2cm,gray,very thin] (-0.5, -0.5) grid (2.5, 8.5);
				\draw[thick, line width=0.7mm] (0,2) rectangle (2,4);
				\node[rectangle,draw,xscale=1.5, yscale=1.5,rotate=45] at (0,3) {};
				\draw (0,2) circle[radius = 5pt];
				\draw (0,4) circle[radius = 6pt];
				\draw[->] (1,-0.2) -- (1,0.2);
				\draw[->] (1,1.8) -- (1, 2.2);
				\draw[->] (1, 3.8) -- (1, 4.2);
				\draw[->] (1, 5.8) -- (1, 6.2);
				\draw[->] (-0.2, 1) -- (0.2, 1);
				\draw[->] (-0.2, 3) -- (0.2, 3);
				\draw[->] (-0.2, 5) -- (0.2, 5);
				\draw[->] (-0.2,7) -- (0.2,7);
				\fill (0,1) circle[radius = 3pt];
				\fill (0,3) circle[radius = 3pt];
				\fill (0,5) circle[radius = 3pt];
				\fill (0,7) circle[radius = 3pt];
				\node at (1, 7) {$(u \tilde{v})_{i, j+\frac{5}{2}}$};
				\node at (1, 5) {$(u \tilde{v})_{i, j+\frac{3}{2}}$};
				\node at (1, 3) {$(u \tilde{v})_{i, j+\frac{1}{2}}$};
				\node at (1, 1) {$(u \tilde{v})_{i, j-\frac{1}{2}}$};
				\node at (0.6, 3.6) {$F^{u,y}_{i,j+1}$};
				\end{tikzpicture} }
			
		\end{center}
		\caption{Reconstruction of $\frac{\partial}{\partial y} (uv)$ on a staggered grid. The diamond marks the location of $u$ in the highlighted cell; numerical fluxes must be calculated at the circles. Interpolations of $v$ are required at the points marked with dots.}
		\label{fig:uvx}
	\end{figure}
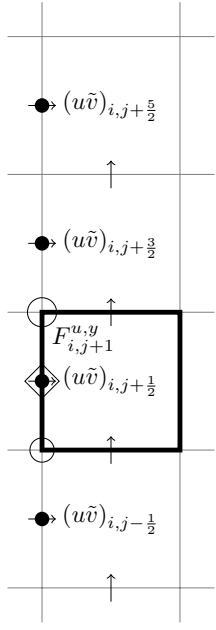
	
	With said interpolations, we now have high-order approximations to the values of $(uv)$ at the desired points (marked with black dots in Fig. \ref{fig:uvx}). Thus, for $F^{u,y}_{i, j+1}$, the procedure would be:
	
	\begin{algorithm}
		\label{algo:crossfluxes}
		~\textbf{Goal:} Find a numerical flux $F^{u,y}_{i, j+1}$ such that, formally, $\forall i,j$:
		
		\begin{linenomath*}\begin{equation*}
				\frac{F^{u,y}_{i, j+1} - F^{u,y}_{i,j}}{\Delta y} = \frac{\partial (\rho_0 uv)}{\partial y} (x_i, y_{j+\frac{1}{2}}) + O(\Delta x^{2k-1} + \Delta y^{2k-1})
		\end{equation*}\end{linenomath*}
		
		\begin{enumerate}
			
			\item Compute the two biased WENO reconstructions of the flux function $g(y) = \left(\rho_0 uv\right) (x_i, y)$ at $(x_{i}, y_{j+1})$. Denoting $g_l :=  \rho_0(x_{i}, y_{l+\frac{1}{2}}) u_{i, l+\frac{1}{2}} \tilde{v}_{i,l+\frac{1}{2}}$, with $\tilde{v}_{i,l+\frac{1}{2}}$ the high-order interpolations of Algorithm \ref{algo:2dinterp}, results in 
			\begin{align*} \hat{g}_{i, j+1}^- & = W_{+\frac{1}{2}}( g_{j-(k-1)}, g_{j-(k-2)}, \dots, g_{j+(k-1)} ) \\
				\hat{g}_{i, j+1}^+ & = W_{-\frac{1}{2}}( g_{j-(k-2)}, g_{j-(k-1)}, \dots, g_{j+k} ) 
			\end{align*}
			
			\item Choose a numerical flux function $G(u^-, u^+)$, and set
			\[ F^{u,y}_{i, j+1} := G(\hat{g}_{i, j+1}^-, \hat{g}_{i, j+1}^+). \]
			Again, one can use the upwind flux function and upwind with respect to $a_{i, j+\frac{1}{2}} := \tilde{v}_{i,j+\frac{1}{2}}$, or use a Lax-Friedrichs or Rusanov flux; the numerical examples shown in the sequel use upwind.
			
		\end{enumerate}
	\end{algorithm}
	\subsection{The advection scheme}
	We collect all the above ingredients and write the advection scheme as:
	
	\begin{algorithm}
		\label{algo:full}
		~\textbf{Goal:} Given $i \in \{1,\dots,d\}$, obtain a high-order, essentially non-oscillatory approximation, on an Arakawa-C grid (Fig. \ref{fig:staggeredgrid}), to the spatial derivatives in:
		\[ \frac{\partial}{\partial t} u_i = -\frac{1}{\rho_0} \sum_{j=1}^d \frac{\partial}{\partial x_j} (\rho_0 u_i u_j) \]
		
		\begin{enumerate}
			\item Using Algorithm \ref{algo:2dinterp}, $\forall j \in \{1,\dots,d\}$, $j \neq i$, obtain high-order approximations to $u_j$ at the location of $u_i$.
			
			\item For $j \in \{1,\dots,d\}$: \begin{enumerate}
				\item If $j = i$, find a numerical flux $F^{u_i, x_i}$ for $\frac{\partial}{\partial x_i} (\rho_0 u_i^2)$ using Algorithm \ref{algo:squaredfluxes}.
				\item Otherwise, find a numerical flux $F^{u_i, x_j}$ for $\frac{\partial}{\partial x_j} (\rho_0 u_i u_j)$ using Algorithm \ref{algo:crossfluxes}.
			\end{enumerate}
			
			\item Find the spatial differences of the computed numerical fluxes $F^{u_i, x_k}$ in the direction of $x_k$, $k \in \{1, \dots, d\}$, and divide by $\rho_0$ to complete the approximation. 
			As an example, the semi-discrete scheme for discretizing \eqref{eq:simple_mom} is 
			\begin{equation}
				\label{eq:2ds}
				\begin{aligned}
					\frac{\partial}{\partial t} u_{i,j+\frac{1}{2}} &= -\frac{1}{\rho_0(x_i, y_{j+\frac{1}{2}})}\left( \frac{F^{u,x}_{i+\frac{1}{2}, j+\frac{1}{2}} - F^{u,x}_{i-\frac{1}{2},j+\frac{1}{2}}}{\Delta x}
					+ \frac{F^{u,y}_{i, j+1} - F^{u,y}_{i,j}}{\Delta y}
					\right) \\
					\frac{\partial}{\partial t} v_{i+\frac{1}{2},j} &= -\frac{1}{\rho_0(x_{i+\frac{1}{2}}, y_j)}\left( \frac{F^{v,y}_{i+1, j} - F^{v,y}_{i,j}}{\Delta x}
					+ \frac{F^{v,y}_{i+\frac{1}{2}, j+\frac{1}{2}} - F^{u,y}_{i+\frac{1}{2},j-\frac{1}{2}}}{\Delta y}
					\right) 
				\end{aligned}
			\end{equation}
		\end{enumerate}
	\end{algorithm}

	\begin{lemma} \label{lemma}
		For sufficiently smooth $U$, $\rho_0$, Algorithm \ref{algo:full} produces arbitrarily high-order accurate approximations to the spatial derivatives of the fluxes in \eqref{eq:momentum}, i.e, for any $k \in {\mathbb N}$, the truncation error in discretizing the advective parts of \eqref{eq:momentum} is $O(\Delta x^{2k-1} + \Delta y^{2k-1})$.
	\end{lemma}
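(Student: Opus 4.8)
The plan is to show that each of the three ingredients in Algorithm \ref{algo:full} --- the two-dimensional ENO interpolation (Algorithm \ref{algo:2dinterp}), the $1$-direction WENO flux (Algorithm \ref{algo:squaredfluxes}), and the cross-direction WENO flux (Algorithm \ref{algo:crossfluxes}) --- is individually of the advertised order, and then assemble these estimates. The crucial structural difference from the analysis in Section \ref{sec:2} is that the interpolation of the advecting velocity is now \emph{genuinely two-dimensional and of full order $2k-1$}, so the mechanism that degraded the scheme in Theorem \ref{theorem} (a central interpolation producing only an $O(\Delta y)$-accurate approximation to $v$ at the half-integer node, whose $O(\Delta y)$ error then got differentiated against $\hat u$) no longer occurs: here $\tilde v_{i,j+\frac12+m} = v(x_i,y_{j+\frac12+m}) + O(\Delta x^{2k-1}+\Delta y^{2k-1})$ for every node used in the stencil.

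First I would treat the easy term, $\frac{\partial}{\partial x}(\rho_0 u^2)$. Setting $f(x) = (\rho_0 u^2)(x,y_{j+\frac12})$, the point values $f_l$ feeding Algorithm \ref{algo:squaredfluxes} are exact, so this is exactly the situation of Proposition \ref{remark:wenoDeriv}: the two biased WENO reconstructions $\hat f^\pm_{i+\frac12,j+\frac12}$ each approximate $h(x_{i+\frac12})$ to $O(\Delta x^{2k-1})$, where $h$ is Shu's implicitly-defined sliding-average function, and since $G$ is a consistent numerical flux ($G(a,a)=a$) with the relevant wave speed having a fixed sign on the stencil for smooth $u$, $F^{u,x}_{i+\frac12,j+\frac12} = h(x_{i+\frac12}) + O(\Delta x^{2k-1})$; the telescoping difference divided by $\Delta x$ then gives $\frac{\partial}{\partial x}(\rho_0 u^2)(x_i,y_{j+\frac12}) + O(\Delta x^{2k-1})$ by \eqref{eq:ftc}. (Strictly, as the remark after Proposition \ref{remark:wenoDeriv} notes, this is $O(\Delta x^{2k-2})$ in the worst case, but in keeping with the paper's convention I would state it as $2k-1$.)

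Next, the cross term $\frac{\partial}{\partial y}(\rho_0 uv)(x_i,y_{j+\frac12})$. Here I would set $g(y) = (\rho_0 uv)(x_i,y)$ and compare the computed point values $g_l = \rho_0(x_i,y_{l+\frac12})\,u_{i,l+\frac12}\,\tilde v_{i,l+\frac12}$ with the exact values $\rho_0(x_i,y_{l+\frac12})\,u_{i,l+\frac12}\,v(x_i,y_{l+\frac12})$. By Algorithm \ref{algo:2dinterp} the discrepancy is $O(\Delta x^{2k-1}+\Delta y^{2k-1})$ uniformly over the stencil nodes; since $\rho_0$ and $u$ are bounded and the WENO reconstruction operators $W_{\pm\frac12}$ are (for smooth data, in the sense of being convex combinations with bounded coefficients) Lipschitz in their arguments with an $O(1)$ constant, the WENO reconstruction of the \emph{perturbed} data $g_l$ differs from that of the exact data by $O(\Delta x^{2k-1}+\Delta y^{2k-1})$. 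Applying Proposition \ref{remark:wenoDeriv} to the exact data $g$ and using consistency of $G$ exactly as in the previous paragraph then gives $\frac{1}{\Delta y}(F^{u,y}_{i,j+1}-F^{u,y}_{i,j}) = g'(y_{j+\frac12}) + O(\Delta x^{2k-1}+\Delta y^{2k-1}) = \frac{\partial}{\partial y}(\rho_0 uv)(x_i,y_{j+\frac12}) + O(\Delta x^{2k-1}+\Delta y^{2k-1})$. Summing the per-direction estimates over $j=1,\dots,d$ (the $d=2$ display \eqref{eq:2ds} being the representative case) and dividing by $\rho_0$, which is bounded below, preserves the order and yields the claim.

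The main obstacle --- and the one place I would be careful --- is the perturbation argument for the WENO reconstruction in the cross term: one must argue that feeding WENO inexact point values (the $\tilde v$'s) rather than exact ones degrades the output by no more than the size of the input perturbation. For \emph{smooth} fields this is fine because the smoothness indicators $\beta_r$, the nonlinear weights $\omega_r$, and hence the whole reconstruction map are smooth functions of the data with derivatives bounded uniformly in $\Delta x$ once $\epsilon>0$ is fixed; but it is worth stating explicitly, since the whole point of Theorem \ref{theorem} was that a naive error-propagation count can be deceptive. A secondary, more bookkeeping-type point is that Algorithm \ref{algo:2dinterp} is itself a two-step composition (vertical ENO to cell centers, then horizontal ENO to the $u$-locations), and one should check that composing two order-$(2k-1)$ interpolations does not lose order --- it does not, because the intermediate quantity $\tilde v_{i+\frac12,j+\frac12}$ equals $v(x_{i+\frac12},y_{j+\frac12})+O(\Delta y^{2k-1})$ and the second interpolation is applied to data that is smooth up to that error, so the errors add rather than compound. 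Once these two points are in place, the rest is the routine telescoping-and-consistency argument already carried out for Proposition \ref{remark:wenoDeriv}.
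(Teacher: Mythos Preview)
Your proposal is correct and follows the same approach as the paper's own argument, which is in fact a single sentence stating that the lemma ``follows in a straightforward manner from the construction and for the accuracy of the WENO method, see Proposition~\ref{remark:wenoDeriv}.'' Your version is a careful elaboration of exactly that sketch: you decompose Algorithm~\ref{algo:full} into its building blocks, reduce each to Proposition~\ref{remark:wenoDeriv}, and correctly flag the two nontrivial points (Lipschitz stability of the WENO map under $O(\Delta^{2k-1})$ input perturbations, and that composing two order-$(2k-1)$ ENO interpolations preserves the order) that the paper leaves implicit.
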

	The proof follows in a straightforward manner from the construction and for the accuracy of the WENO method, see Proposition \ref{remark:wenoDeriv}. 
	
	\subsection{The complete scheme}
	In this section, we will provide an overview of the discretizations of other terms in the anelastic equations.
	
	\subsubsection{Source terms}
	Source terms in the equations (e.g. buoyancy $b$ in the equation of momentum \eqref{eq:momentum}, or any other physical source terms) are easy to deal with in this context. Since buoyancy
	has a closed form in the anelastic formulation (see section \ref{passivescalar}), this reduces to computing the source terms at the appropriate locations and adding them to the tendencies, i.e, discretizations of the advective terms, computed in Algorithm \ref{algo:full}.
	
	\subsubsection{Pressure}
	For the approximation of the pressure gradient in \eqref{eq:momentum}, and the continuity equation \eqref{eq:continuity}, the well-known idea of Leray projection into the space of divergence-free fields can be used. The divergence operator is applied to \eqref{eq:momentum} yielding,
	\[ \sum_{i=1}^d \frac{\partial^2}{\partial t \partial x_i} \rho_0 u_i = \sum_{i=1}^d \sum_{j=1}^d -\frac{\partial^2}{\partial x_i x_j} (\rho_0 u_i u_j) +   \frac{\partial}{\partial x_3}(\rho_0 b \delta_{i,3}) - \sum_{i=1}^d \frac{\partial}{\partial x_i} \rho_0 \frac{\partial}{\partial x_i} \frac{p'}{\rho_0}. \]
	Discretizing the time derivative with forward Euler method with time step $\Delta t$, we can rewrite it as:
	\[ \sum_{i=1}^d \left( \frac{\partial}{\partial x_i} \rho_0 u_i \right)^{n+1} = \sum_{i=1}^d \left( \frac{\partial}{\partial x_i} \rho_0 u_i \right)^{n} + \Delta t \sum_{i=1}^d \left(\sum_{j=1}^d \frac{\partial^2}{\partial x_i x_j} (\rho_0 u_i u_j) + \frac{\partial}{\partial x_3}(\rho_0  b \delta_{i,3}) - \frac{\partial}{\partial x_i} \rho_0 \frac{\partial}{\partial x_i} \frac{p'}{\rho_0} \right) \]
	
	Setting the left hand side to zero, in order to enforce null divergence, and rearranging results in
	\begin{equation} \label{eq:poissonlike} \sum_{i=1}^d \frac{\partial}{\partial x_i} \rho_0 \frac{\partial}{\partial x_i} \frac{p' \Delta t}{\rho_0} = \sum_{i=1}^d \frac{\partial}{\partial x_i} \rho_0 \left(  u_i^n + \frac{\Delta t}{\rho_0} \sum_{j=1}^d\frac{\partial}{\partial x_j}(\rho_0 u_i u_j) + \Delta t b \delta_{i,3} \right) \end{equation}
	
	We observe that eq. \eqref{eq:poissonlike} is a Poisson equation, with variable coefficients, for the function $\frac{p' \Delta t}{\rho_0}$, and the right hand side is the result of evolving $(u_i)^n$ by forward Euler with the spatial tendencies computed with Algorithm \ref{algo:full}.
	\rev{
	The approach that we use for solving this equation is standard in the literature; we include a sketch of the algorithm here for completeness. Let us consider for simplicity problems depending only on the horizontal directions $(x, y)$, in which we assume periodic boundary conditions, and constant $\rho_0$. If we denote by $\Delta$ the spatial Laplace operator, and $\tilde{U}$ the vector field of velocities computed by Algorithm \ref{algo:full}, 
	\[ \tilde{U}_i := u_i^n + \frac{\Delta t}{\rho_0} \sum_{j=1}^d\frac{\partial}{\partial x_j}(\rho_0 u_i^n u_j^n) + \Delta t b^n \delta_{i,3} , \qquad \text{ for } i=\{1, \dots, d\}, \]
	the scheme reduces, up to constant factors, to
	\begin{equation} \Delta p' = \nabla \cdot \tilde{U}. \label{eq:simplelapl} \end{equation}
	}
	\begin{algorithm}
	\label{algo:pressure}
	\rev{
	\textbf{Goal:} Given a field of intermediate values $\tilde{U}_{i,j}$, and a one-dimensional discrete approximation to the derivative, operator $D$, compute the projection $U_{i,j}$ of $\tilde{U}$ into the space of discretely divergence free functions with respect to $D$. That is, for all $i,j$,
	\[ D_1 (U_1)_{i,j} + D_2 (U_2)_{i,j} = 0 \]
	}
	
	\begin{enumerate}
	    \item \label{step:div} For each cell $C_{i,j}$, compute an approximation to the divergence $\eta_{i,j} = D_1 (\tilde{U}_1)_{i,j} + D_2 (\tilde{U}_2)_{i,j} $.
	    \item Apply a discrete Fourier transform to $\{\eta\}_{i,j}$ to obtain its coefficients $\hat{\eta}_{k,l}$.
	    \item Some straightforward computations on eq. \eqref{eq:simplelapl} show that the Fourier coefficients of $p$ are given by 
	    \[ \hat{p}_{k,l} := -\frac{1}{4 \pi^2} \frac{\hat{\eta}_{k,l}}{\frac{k^2}{(x_{max}-x_{min})^2} + \frac{l^2}{(y_{max}-y_{min})^2} } \]
	    \item \label{step:grad} Apply the inverse discrete Fourier transform to compute the values of $p_{i,j}$ at the grid, and its gradient, $G_{i,j} = (D_1 p_{i,j}, D_2 p_{i,j})$. 
	    \item Set $U_{i,j} := \tilde{U}_{i,j} - G_{i,j}$.
	\end{enumerate}
	
	\end{algorithm}
	
	\begin{remark}
	\rev{
	\label{rem:discrdiv}
	Choosing the discrete derivative operator $D$ presents some difficulties. We wish our scheme to be discretely divergence-free with respect to the WENO derivative operator used in Algorithm \ref{algo:full}. However, its exact form depends on the data, through the stencil selection process. If we choose $D$ to be the WENO derivative, due to different choices of stencils in steps \ref{step:div} and \ref{step:grad}, it cannot be guaranteed that $D_1 (U_1)_{i,j} + D_2 (U_2)_{i,j} = 0$. Therefore, we opt for a high-order central difference operator in the divergence-projection step. Although the divergence is only approximated up to truncation errors, we empirically obtain good results from this choice, even in the presence of sharp gradients. Some examples can be found in section \ref{sec:4}.
	}
	\end{remark}
	
	\begin{remark}
	\rev{
	For other, non-periodic boundary conditions, the treatment is more involved. For numerical example \ref{bubble}, we impose no-flow boundary conditions across the vertical boundaries. The lack of periodicity means that the efficient, pure Fourier algorithm above is no longer applicable. Instead, we combine one-dimensional Fourier transforms for the 1D horizontal cuts, and finite differences with symmetric boundary conditions for derivatives in the vertical direction. For a low-order approximation to the second derivative, this can be formulated as a semi-discretization ($x$-continuous, $y$-discrete), such as:
	\begin{align} \begin{split} \label{eq:nonperiodicpres}  
	\rho_0(y_{j+\frac{1}{2}}) \partial_{xx} p_{j+\frac{1}{2}}(x_{i+\frac{1}{2}}) + \frac{\rho_0(y_{j+1})[p_{j+\frac{3}{2}}(x_{i+\frac{1}{2}}) - p_{j+\frac{1}{2}}(x_{i+\frac{1}{2}})] - \rho_0(y_j)[p_{j+\frac{1}{2}}(x_{i+\frac{1}{2}}) - p_{j-\frac{1}{2}}(x_{i+\frac{1}{2}})]}{\Delta y^2}\\ \qquad = \eta_{j+\frac{1}{2}}(x_{i+\frac{1}{2}}), \end{split} \end{align}
	where in the case of constant $\rho_0(y)$, the difference above reduces to a standard three-point approximation of the second derivative. Applying a Fourier transform (in the horizontal dimension) to equation \eqref{eq:nonperiodicpres}, and exploiting linearity, will ultimately produce the Fourier coefficients of the 1D cuts as a solution of a (tridiagonal, in this case) linear system of equations. Further details can be found e.g. in \cite{Pressel2015} and references therein.
	}
	\end{remark}

	\subsubsection{Time evolution}
	So far, we have a semi-discrete scheme of the form:
	\[ \frac{d}{dt} U_{i,j}(t) = L(t, U(t)) \]
	where $L$ is the operator that collects all spatial discretizations. A time marching algorithm is necessary to complete our discretization. Given our focus on obtaining a non-oscillatory discretization, we choose the popular strong stability preserving, third order Runge-Kutta method (SSP-RK3), described e.g. by Shu in \cite{Shu1989} for discretizing ODE $\varphi^{\prime}(t) = L(t,\varphi(t))$ with 
	
	\begin{align*}
		\varphi^{(1)} & = \varphi_n + \Delta t (L(t_n, \varphi_n)) \\
		\varphi^{(2)} & = \frac{3}{4} \varphi_n + \frac{1}{4}\left( \varphi^{(1)} + \Delta t \left(L(t_n + \Delta t, \varphi^{(1)}) \right) \right) \\
		\varphi_{n+1} & = \frac{1}{3} \varphi_n + \frac{2}{3}\left( \varphi^{(2)} + \Delta t \left(L(t_n + \frac{\Delta t}{2}, \varphi^{(2)}) \right) \right)
	\end{align*}	
	\subsubsection{Layout of the full scheme}
	The complete scheme is realized in the following algorithmic form, 
	
	\begin{algorithm}
		\label{algo:everything}
		~\textbf{Goal:} Find numerical approximations to the anelastic Euler equations \eqref{eq:momentum}-\eqref{eq:continuity} on an Arakawa-C grid.
		
		For each time step, $t = t^n$: \begin{enumerate}
			\item Compute $\Delta t^n$ such that a CFL condition is verified
			\item For each Runge-Kutta substep: \begin{enumerate}
				\item Compute the advective tendencies for momentum (Algorithm \ref{algo:full})
				\item Add source terms
				\item Compute dynamic pressure and apply correction to computed advective tendencies (Algorithm \ref{algo:pressure})
			\end{enumerate}
			\item Compute total SSP-RK3 tendencies and update velocities
			\item $t^{n+1} = t^n + \Delta t^n$
		\end{enumerate}
	\end{algorithm}

	\section{Numerical experiments}
	\label{sec:4}
	In this section, we will present a suite of numerical experiments in order to compare WENO schemes, such as the one described in \cite{Pressel2015} and the one proposed here, to each other and to well-known central schemes such as the	Wicker-Skamarock scheme \cite{Skamarock2008WRF} and the Morinishi scheme \cite{Morinishi1998}. See Appendix \ref{appendixcentral} for a detailed description of these schemes.
	\subsection{Discontinuous vortex patch}
	\label{sec:vortexpatch}
	The first numerical experiment is designed to test the ability of a finite difference scheme to approximate sharp gradients. We consider only the advective part of the momentum equations \eqref{eq:simple_mom} on
	the two-dimensional domain $D= [0,2\pi]^2$ with periodic boundary conditions. The initial datum is
	\begin{linenomath*}\begin{equation*}
			u_0(x,y) = \begin{cases}
				-\frac{1}{2}(y-\pi)  &\mbox{if } (x,y)\in\Delta \\
				0  &\mbox{otherwise}
			\end{cases},\hspace{1cm}
			v_0(x,y) = \begin{cases}
				\frac{1}{2}(x-\pi)  &\mbox{if } (x,y)\in\Delta \\
				0  &\mbox{otherwise} \\
			\end{cases}
	\end{equation*}\end{linenomath*}
	
	Here, $\Delta = \{(x,y)\in\mathbb{R}^2, (x-\pi)^2 + (y-\pi)^2 < \frac{\pi}{2}\}$. Thus, initial velocity is discontinuous and the initial vorticity is confined to the region $\Delta$. All schemes are tested on a uniform Cartesian
	grid of $128^2$ points. The time step is chosen to be consistent with a CFL number of $0.1$. 
	
	We approximate \eqref{eq:simple_mom} with the sixth-order scheme of Morinishi et al \cite{Morinishi1998}, see also Appendix A, on this grid and display the results at time $T=1$ in Figure \ref{fig:vortexpatchm6}. We observe from Figure \ref{fig:vortexpatchm6} that the 
	computed velocity with this high-order central scheme is already very oscillatory around the discontinuity at this time. These spurious oscillations build up in time and the scheme blows up by time $T=2$. Similar behavior was also seen in the case of the Wicker-Skamarock scheme, \cite{Skamarock2008WRF} and Appendix \ref{appendixcentral}, and for different orders of the central schemes. The numerical results are expected as the central scheme will become unstable around discontinuities. 
	
	We compute solutions for this test case with the fifth order versions of the WENO scheme of \cite{Pressel2015} and the WENO scheme proposed in this paper, i.e. Algorithm \ref{algo:everything}; and display the results in Figures \ref{fig:vortexpatchorig} and \ref{fig:vortexpatchnew}. The results show that both schemes produce very stable approximations, even at the time $T=5$. In particular, the vortex patch is very well confined to the disk. There are minor differences between the WENO scheme of \cite{Pressel2015} that we denote by Algorithm \ref{algo:pycles} and the scheme proposed here (Algorithm \ref{algo:everything}). In particular, although both schemes produce small amplitude oscillations at the discontinuity, the WENO scheme proposed here resolves the discontinuity with oscillations of significantly smaller amplitude. 
	
	Summarizing, both WENO schemes are quite good at resolving discontinuities or sharp gradients. This is in sharp contrast to central finite difference scheme which can blow up at discontinuities due to spurious oscillations.
	
	\begin{figure}[htp]
		\centering
		\begin{subfigure}{.33\textwidth}
			\centering
			\includegraphics[width=\linewidth,clip=true]{./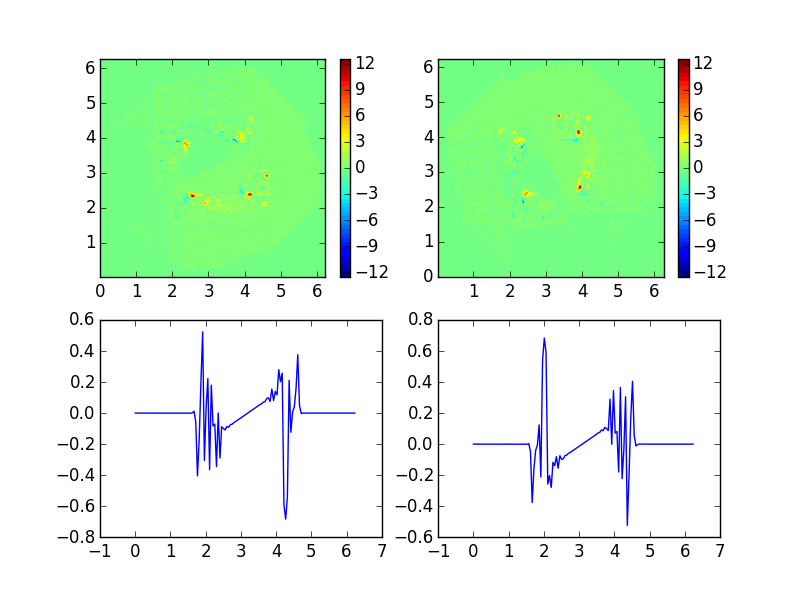}
			\caption{6th order Morinishi, $T = 1$}
			\label{fig:vortexpatchm6}
		\end{subfigure}%
		\begin{subfigure}{.33\textwidth}
			\centering
			\includegraphics[width=\linewidth,clip=true]{./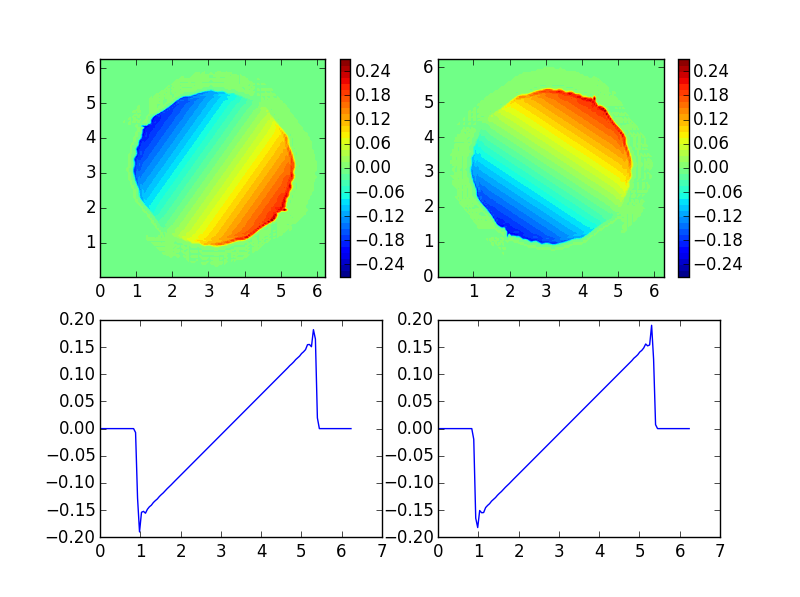}
			\caption{Algorithm \ref{algo:pycles}), $T=5$}
			\label{fig:vortexpatchorig}
		\end{subfigure}%
		\begin{subfigure}{.33\textwidth}
			\centering
			\includegraphics[width=\linewidth,clip=true]{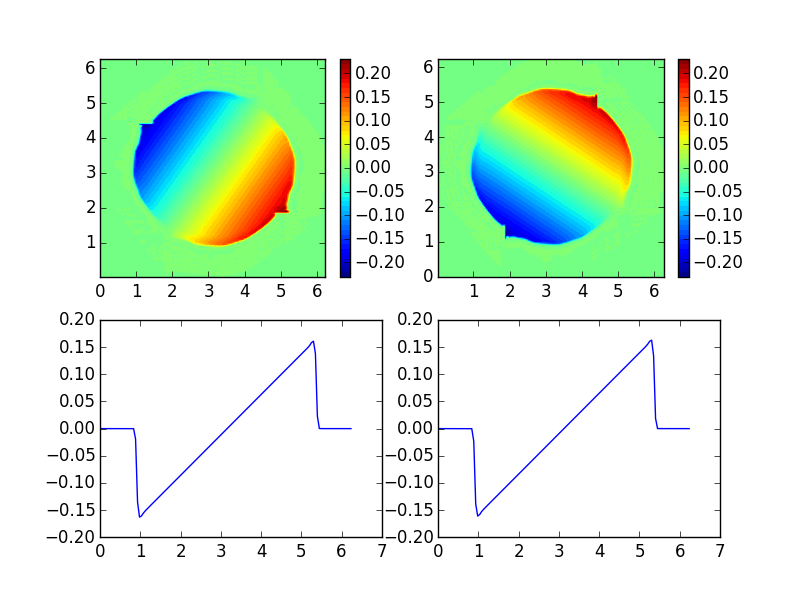}
			\caption{Algorithm \ref{algo:full}), $T=5$}
			\label{fig:vortexpatchnew}
		\end{subfigure}
		\caption{Simulations of discontinuous vortex patch problem. In each figure, left plots represent $u$ (2D, and 1D cut along $y=\pi$), right ones $v$ (2D, and 1D cut along $x = \pi$).}
	\end{figure}
	\subsection{Taylor vortex}
	\label{taylorvortex}
	In this experiment, we fix the domain $[-8,8]^2$ with periodic boundary conditions and solve the anelastic equations \eqref{eq:momentum}, \eqref{eq:continuity} with buoyancy $b=0$ and $\rho_0 \equiv 1$, i.e. incompressible Euler, and the following initial condition,
	\begin{linenomath*}\begin{align}
			u_0(x,y) & = -y e^{\frac{1}{2}(1 - x^2 - y^2)}  + 8 \\
			v_0(x,y) & = \ \ x e^{\frac{1}{2}(1 - x^2 - y^2)}
	\end{align}\end{linenomath*}
	As no exact solution is available, we compute a reference solution on a very fine grid of $8748 \times 8748$ points, and an appropriately small time step, using a sixth-order Morinishi scheme of \cite{Morinishi1998}. We will compute approximations with the WENO scheme of \cite{Pressel2015} (Algorithm \ref{algo:pycles}) and the WENO scheme proposed here (Algorithm \ref{algo:everything}) and compute corresponding errors with respect to the reference solution. 
	
	For all solvers discussed in this test, we use a pressure solver based on high-order, centered finite difference approximations to the divergence, as stated in Remark \ref{rem:discrdiv}
	
	To guarantee that the leading error term in the advection scheme stems from the spatial discretization, we require $\Delta x^q > \Delta t^3$ (as we use SSP-RK3). For this reason, we limit these tests to $q \in \{ 3,5 \}$ (i.e. $k=\{2,3\}$ in Algorithms \ref{algo:pycles} and \ref{algo:everything}), and we take a fixed $\Delta t$ with $\Delta t^3 < \Delta x^5$. For the resolutions considered, $\Delta t = 0.0001$ suffices. We run our tests up to $T = 0.01$ (i.e. 100 time steps).	
	For both the original and new schemes, we have compared third and fifth order approximations, on grids where $n_x = n_y$. We show here the discrete $L^1$ error for $u$ with respect to the reference solution; \rev{across all numerical examples, we find that the behaviour of the error is completely analogous in all discrete $L^p$-norms, for $p \in [1, \infty)$.}
	
	\begin{table}
		\begin{center}
			\begin{tabular}{c || c c | c c || c c | c c }
				& \multicolumn{4}{c}{Advection scheme (Alg. \ref{algo:pycles})} & \multicolumn{4}{c}{High-order formulation (Alg. \ref{algo:full})}\\ \hline
				$n_x$ & u (3rd) & EOC & u (5th) & EOC  & u (3rd) & EOC & u (5th) & EOC \\ \hline
				12 & 0.0469 & -- &  0.0424 & -- & 0.0529 & -- &  0.0465 & --   \\
				36 & 0.0267 & 0.51 & 0.00132 & 3.15 & 0.0069 & 1.66  & 0.00167 & 3.03  \\
				108 & 0.00802 & 1.09 & 8.41e-5 & 2.51  & 0.000419 & 2.55 & 7.45e-6 & 4.93 \\
				324 & 0.00284 & 0.94 & 9.89e-6 & 1.94 & 2.67e-5 & 2.51 & 3.89e-8 & 4.78 \\
				972 & 0.000939 & 1.01 & 1.06e-6 & 2.03  & 1.39e-5 & 2.69 & 1.88e-10 & 4.85 \\
			\end{tabular}
		\end{center}
		\caption{Discrete $L^1$ numerical error for experiment \ref{taylorvortex} (Taylor vortex)}
		\label{tab:taylorvtx}
	\end{table}
	
	The numerical error results in Table \ref{tab:taylorvtx} are consistent with our theoretical findings. The new scheme converges at a rate that is dictated by the order of the underlying approximations whereas the WENO scheme used by \cite{Pressel2015} is restricted to at most second order accuracy. Interestingly, the version of Algorithm \ref{algo:pycles} using third order interpolations shows only first order convergence. This is on account of the choice of the stencil in step 1 of Algorithm \ref{algo:pycles}: instead of taking $2k$ points, in the description of the scheme in \cite{Pressel2015}, the $(2k-2)$-point symmetric stencil was chosen. Revisiting the proof of Theorem \ref{theorem}, it is easy to see that taking a $(2k-2)$-nd order accurate approximation, the order of convergence is $\min\{2, 2k-3\}$; with $k=2$ (i.e. WENO3) this only produces first-order accuracy. 		
	
	The computational cost, for different advection schemes on this test problem, is shown in Figure \ref{fig:compcost}. Fig. \ref{fig:runtime} shows that Algorithm \ref{algo:everything} is the most expensive among all the schemes, on account of the many high-order interpolations that are required herein. However, this cost pays off when we consider the overall computational complexity of the competing schemes i.e, the error with respect to the computational cost. This complexity is plotted in figure \ref{fig:error}. From this figure, we observe that clearly the arbitrarily high-order WENO schemes proposed here significantly outperform the WENO schemes of \cite{Pressel2015}. For the same cost, the error with the genuinely high-order schemes is indeed significantly less than the WENO schemes of \cite{Pressel2015}. Moreover, the proposed WENO schemes are comparable in the error vs. cost with corresponding high-order central schemes of \cite{Morinishi1998}. This is not surprising as these central finite difference schemes are also high-order accurate. On the other hand, given the fact that the central schemes will oscillate at sharp gradients, the high-order WENO schemes provide an added advantage over them. Figure \ref{fig:error} also nicely illustrates the advantage of using a high-order scheme: one obtains a much smaller error at comparable cost to a low-order scheme, even if a low-order scheme has low cost on the same resolution. 
	
	\begin{figure}[htp]
		\centering
		\begin{subfigure}{.5\textwidth}
			\centering
			\includegraphics[width=\linewidth,clip=true]{./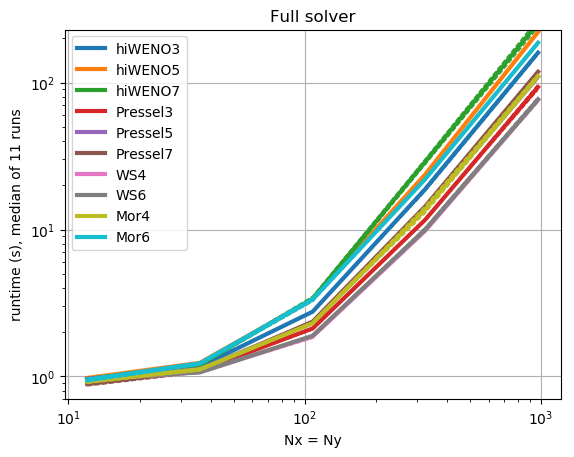}
			\caption{Total runtime of simulation}
			\label{fig:runtime}
		\end{subfigure}%
		\begin{subfigure}{.5\textwidth}
			\centering
			\includegraphics[width=\linewidth,clip=true]{./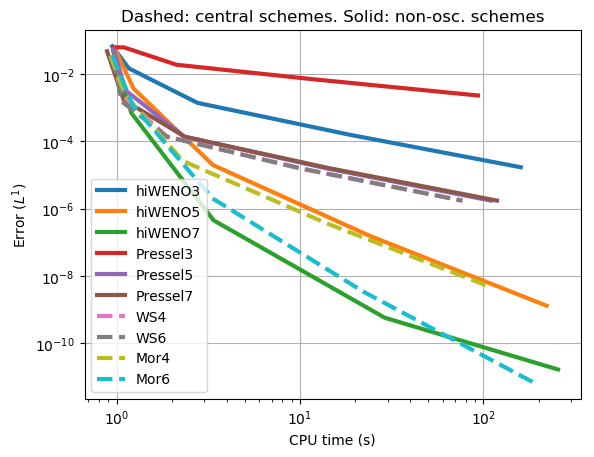}
			\caption{Numerical error for a given runtime}
			\label{fig:error}
		\end{subfigure}%
		\caption{Study of the computational cost of the full scheme, when using different algorithms for the advection: Alg. \ref{algo:full} (labeled hiWENO), Alg. \ref{algo:pycles} (labelled Pressel), as well as Wicker-Skamarock and Morinishi (see Appendix \ref{appendixcentral}).}
		\label{fig:compcost}
	\end{figure}

	\subsection{Double shear layer}
	\label{shear}
	In this numerical example, we present the well-known test of the (double) shear layer, e.g. \cite{Zhang2009}, \cite{Ghosh2012}.
	
	For $(x,y) \in [0, 2 \pi) \times [0, 2 \pi)$ with periodic boundary conditions, we solve the anelastic equations \eqref{eq:momentum}, \eqref{eq:continuity}, with initial data,
	\begin{linenomath*}\begin{equation*}
			u_0(x,y) = \begin{cases}
				\tanh\left[ \frac{1}{\rho} \left( y - \frac{\pi}{2} \right) \right] &\mbox{if } y < \pi \\
				\tanh\left[ \frac{1}{\rho} \left( \frac{3 \pi}{2} - y \right) \right] &\mbox{if } y \ge \pi \\
			\end{cases}, \hspace{1cm} v_0(x,y) = \delta \sin(x)
	\end{equation*}\end{linenomath*}
	Following \cite{Ghosh2012}, we set $\rho = \frac{\pi}{15}$, $\delta= 0.05$, and approximate the problem on a grid of $256 \times 256$ points, using an underlying WENO5 scheme and a SSP-RK3 time marching method. We take a CFL number of 0.2. For the WENO scheme proposed here (Algorithm \ref{algo:everything}), reconstructions are performed with an ENO5 method. Figure \ref{fig:compare} shows a numerical approximation of the vorticity ($-\frac{\partial u}{\partial y} + \frac{\partial v}{\partial x}$) at the cell centers, at three different times. There are very minor differences between the two WENO schemes, with the scheme proposed here adding slightly less dissipation. Both schemes approximate the solution very well, even at late times. This is in marked contrast to the behavior of well-known central schemes such as the schemes of \cite{Morinishi1998} and \cite{Skamarock2008WRF}. This contrast can be seen from Figure \ref{fig:centralshear} where the vorticity, computed with sixth-order versions of both central schemes are shown. In contrast to the WENO schemes, the central schemes seem to become unstable at later times. This example clearly demonstrates the superiority of WENO type schemes in long time integration of anelastic equations. 
	
	\begin{figure}
		\begin{subfigure}{.48\textwidth}
			\centering
			\includegraphics[width=.8\linewidth]{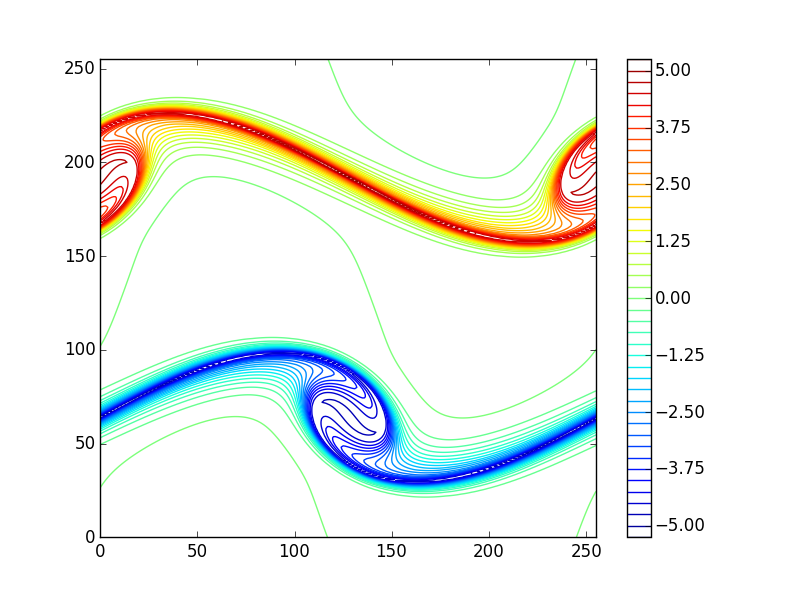}
			\caption{Vorticity at $t=5$, Algorithm \ref{algo:pycles}}
		\end{subfigure}%
		\begin{subfigure}{.48\textwidth}
			\centering
			\includegraphics[width=.8\linewidth]{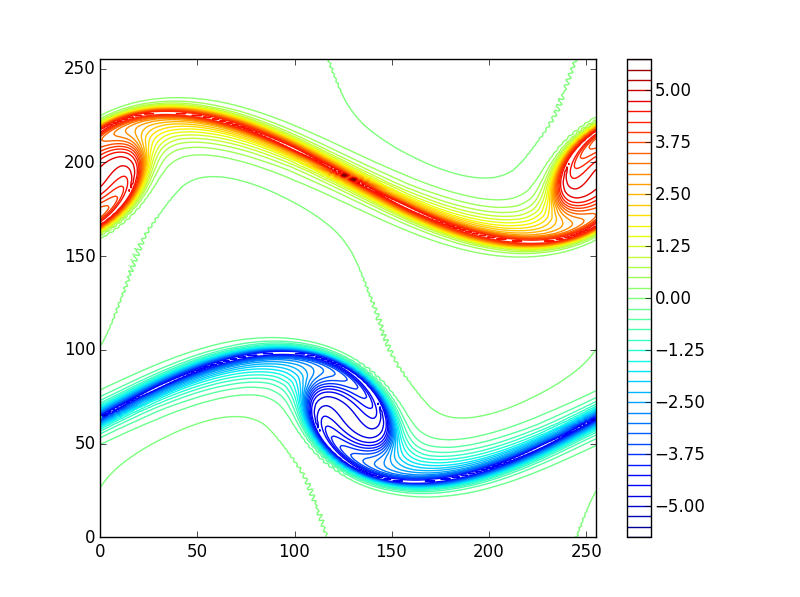}
			\caption{Vorticity at $t=5$, Algorithm \ref{algo:full}}
		\end{subfigure}
		
		\begin{subfigure}{.48\textwidth}
			\centering
			\includegraphics[width=.8\linewidth]{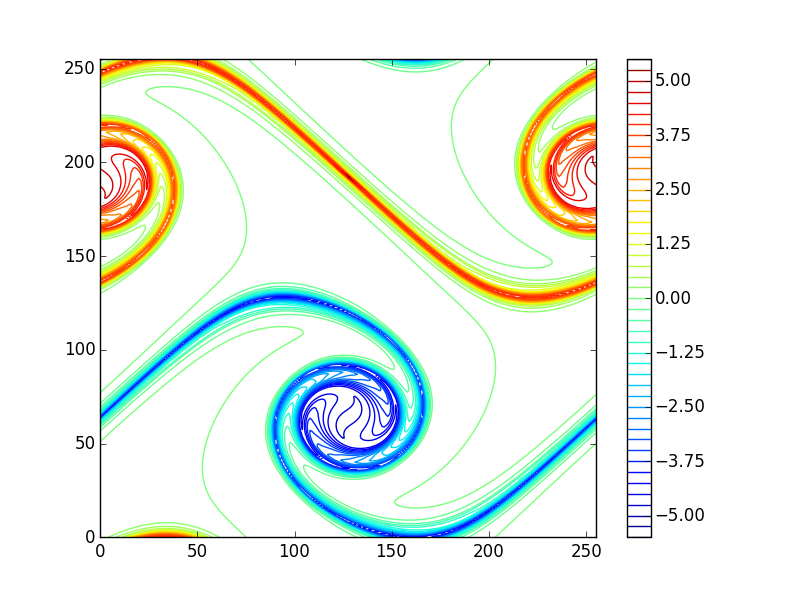}
			\caption{Vorticity at $t=7$, Algorithm \ref{algo:pycles}}
		\end{subfigure}%
		\begin{subfigure}{.48\textwidth}
			\centering
			\includegraphics[width=.8\linewidth]{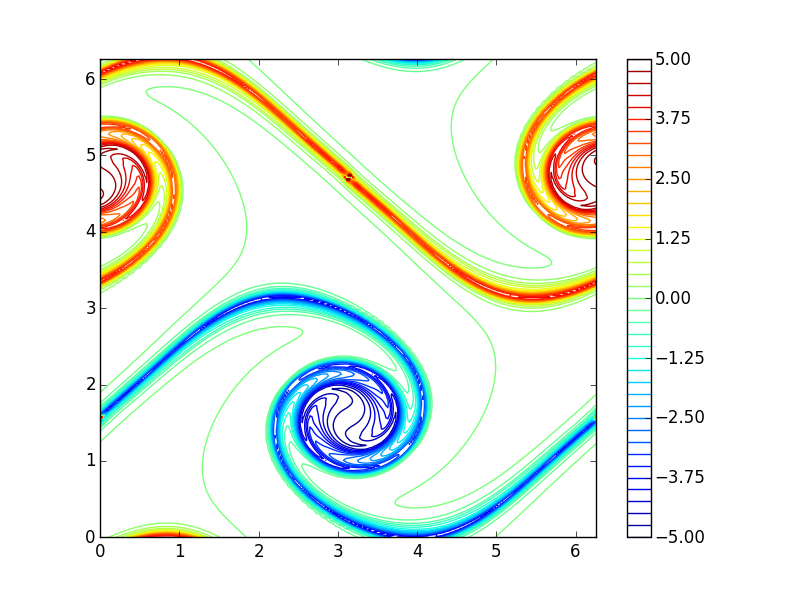}
			\caption{Vorticity at $t=7$, Algorithm \ref{algo:full}}
		\end{subfigure}
		
		\begin{subfigure}{.48\textwidth}
			\centering
			\includegraphics[width=.8\linewidth]{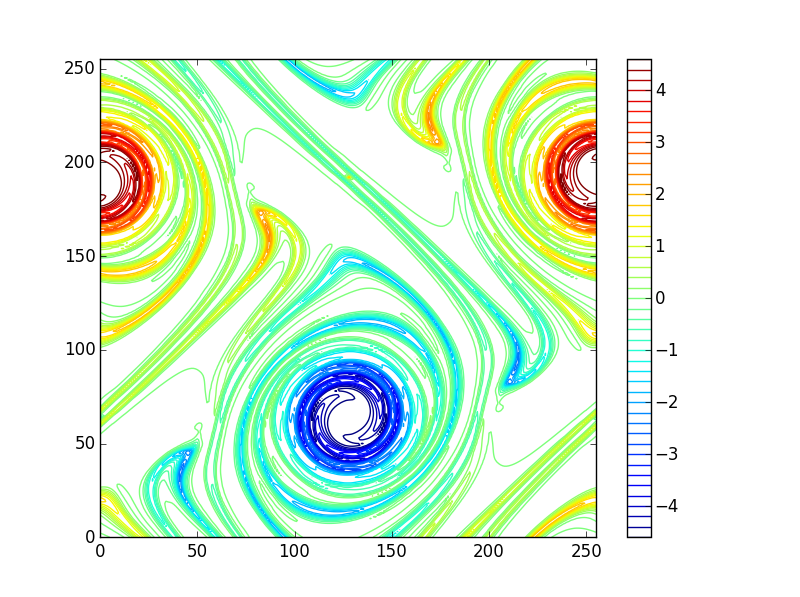}
			\caption{Vorticity at $t=14$, Algorithm \ref{algo:pycles}}
		\end{subfigure}
		\begin{subfigure}{.48\textwidth}
			\centering
			\includegraphics[width=.8\linewidth]{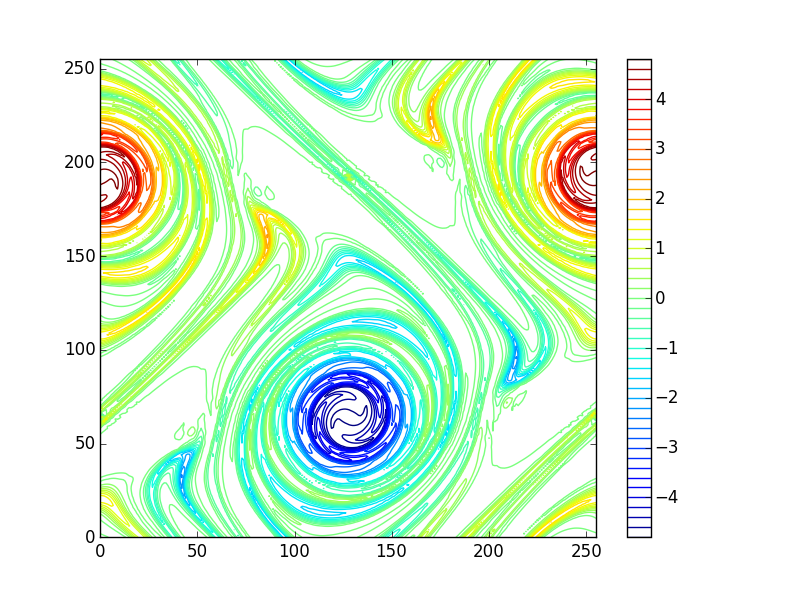}
			\caption{Vorticity at $t=14$, Algorithm \ref{algo:full}}
		\end{subfigure}
		\caption{Vorticity for problem \ref{shear}. We compare Algorithm \ref{algo:pycles} and \ref{algo:full} for the advection of momentum, otherwise using the same pressure solver, time-stepping, etc. }
		\label{fig:compare}
	\end{figure}

	\begin{figure}
		\begin{subfigure}{.48\textwidth}
			\centering
			\includegraphics[width=.8\linewidth]{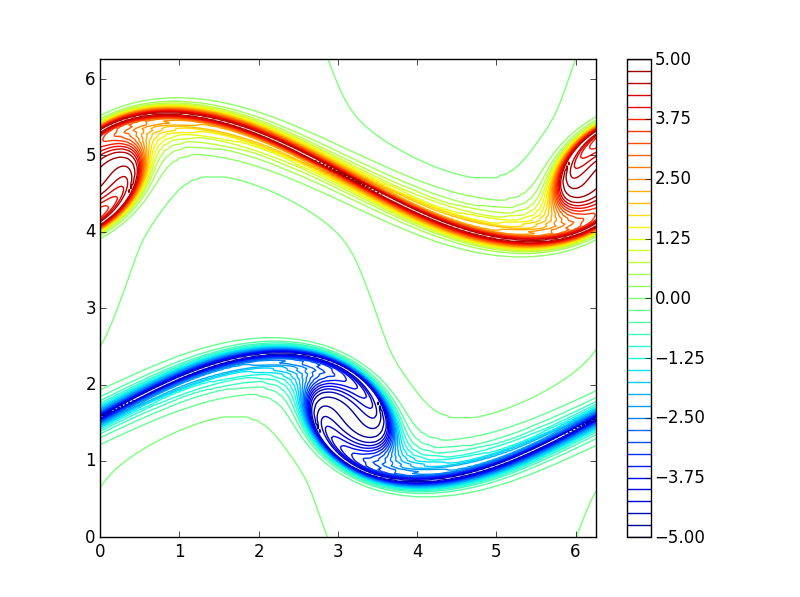}
			\caption{Vorticity at $t=5$, Morinishi-6}
		\end{subfigure}%
		\begin{subfigure}{.48\textwidth}
			\centering
			\includegraphics[width=.8\linewidth]{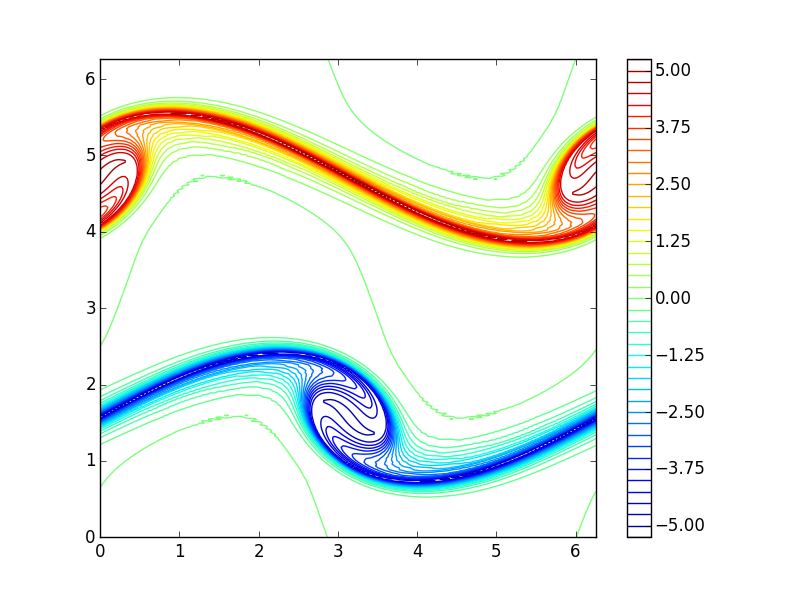}
			\caption{Vorticity at $t=5$, Wicker-Skamarock-6}
		\end{subfigure}
		
		\begin{subfigure}{.48\textwidth}
			\centering
			\includegraphics[width=.8\linewidth]{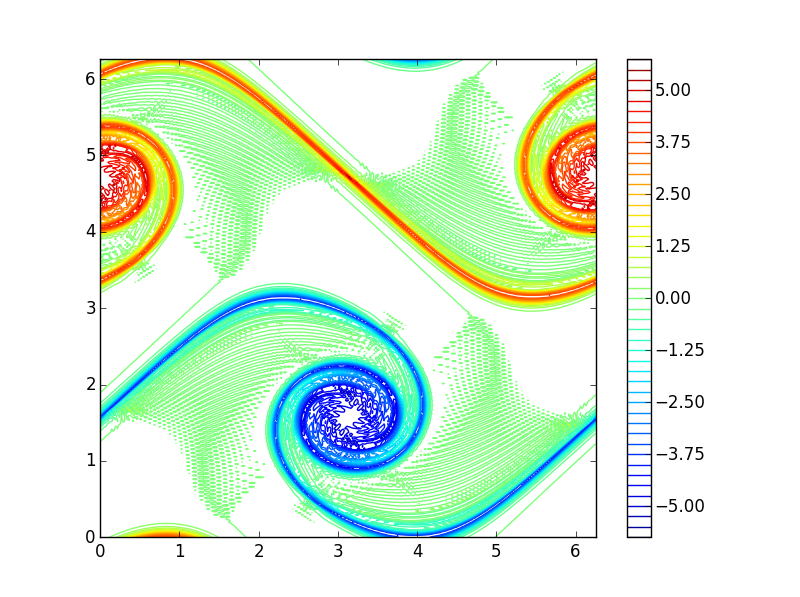}
			\caption{Vorticity at $t=7$, Morinishi-6}
		\end{subfigure}%
		\begin{subfigure}{.48\textwidth}
			\centering
			\includegraphics[width=.8\linewidth]{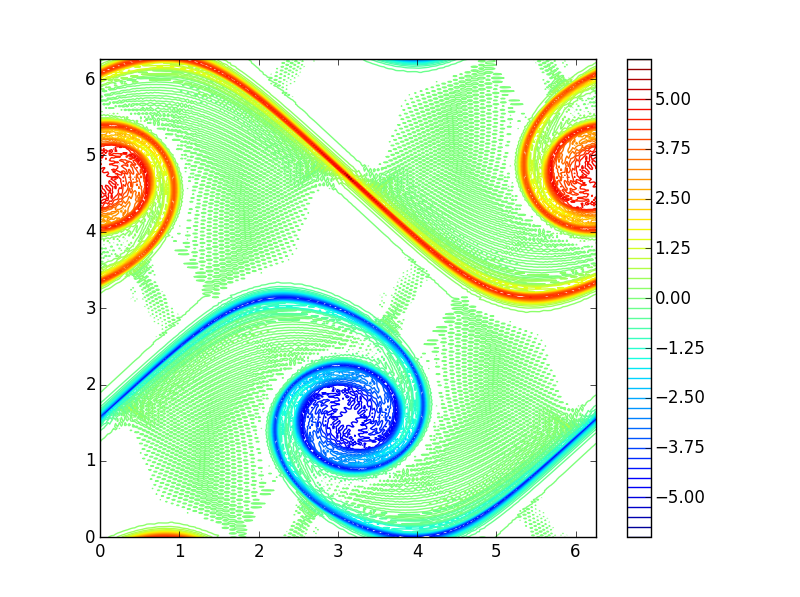}
			\caption{Vorticity at $t=7$, Wicker-Skamarock-6}
		\end{subfigure}
		
		\begin{subfigure}{.48\textwidth}
			\centering
			\includegraphics[width=.8\linewidth]{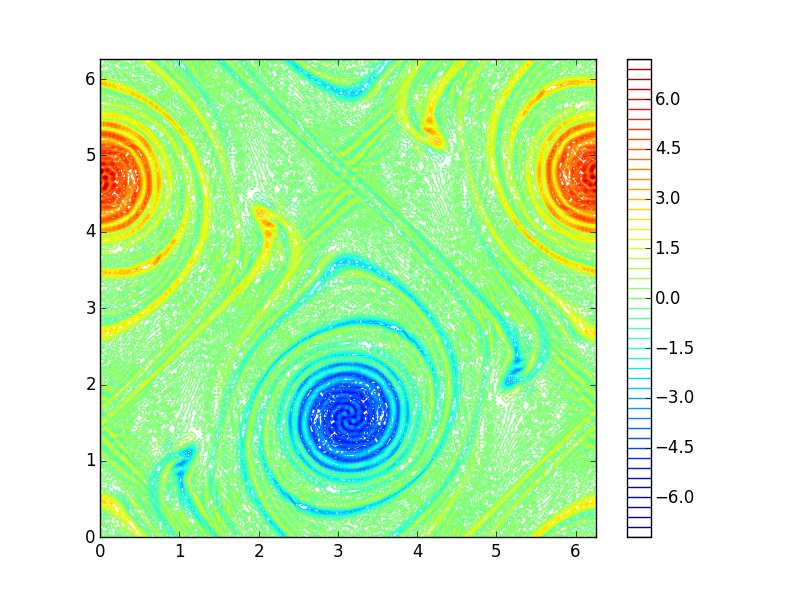}
			\caption{Vorticity at $t=14$, Morinishi-6}
		\end{subfigure}%
		\begin{subfigure}{.48\textwidth}
			\centering
			\includegraphics[width=.8\linewidth]{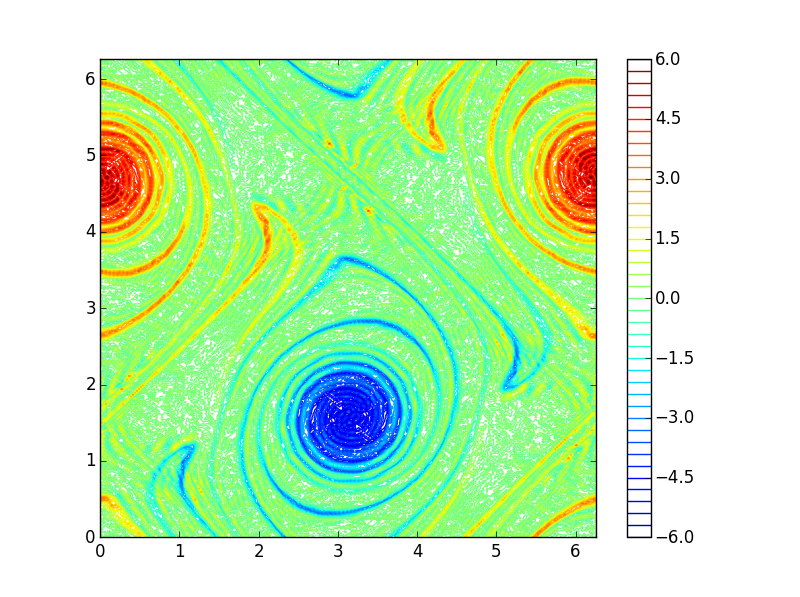}
			\caption{Vorticity at $t=14$, Wicker-Skamarock-6}
		\end{subfigure}
		\caption{Comparison of the central schemes in Appendix \ref{appendixcentral}: 6th order Morinishi and Wicker-Skamarock, at the same time instants as in Fig. \ref{fig:compare}.}
		\label{fig:centralshear}
	\end{figure}
	
	\subsection{Dirichlet boundary conditions}
	\label{dirichlet}
	\revb{
	The examples presented above have showcased the behaviour of our proposed scheme in a domain with periodic boundary conditions. In this section, we demonstrate the fact that the scheme can also be used with other types of boundary conditions; in particular, we present here an example with Dirichlet boundary conditions.
	
	In this example, we \textit{manufacture} the following divergence-free field:
	\begin{align}
	    u(x,y,t) &= (1+t) \sin(x) \cos(y) \label{eq:manufsolu} \\ 
	    v(x,y,t) &= -(1+t) \cos(x) \sin(y) \label{eq:manufsolv}.
	\end{align}
	and we add source terms $\Phi_u$, $\Phi_v$ to eq. \eqref{eq:momentum} so that the field $(u,v)$ verifies the equations
	\begin{align}
	    \partial_t u + \partial_x (u^2) + \partial_y (uv) + \partial_x p &= \Phi_u \label{eq:manufu} \\ 
	    \partial_t v + \partial_x (uv) + \partial_y (v^2) + \partial_y p &= \Phi_v. \label{eq:manufv}
	\end{align}
	Specifically, this implies a choice of
	\begin{align} \Phi_u(x,y,t) &= \sin(x)\cos(y) + \frac{(1+t)^2}{2} \sin(2x), \\
	\Phi_v(x,y,t) &= -\cos(x)\sin(y) + \frac{(1+t)^2}{2} \sin(2y). \end{align}
	
	In other words, consider the domain $\Omega = [0, \pi]^2$ and seek a solution to  \eqref{eq:manufu}-\eqref{eq:manufv}, subject to the boundary conditions \eqref{eq:manufsolu}-\eqref{eq:manufsolv} for $(x,y) \in \partial \Omega$, for all $t \in [0, 0.1]$.
	
	The implementation of this configuration is straightforward: the exact values of the solution \eqref{eq:manufsolu}-\eqref{eq:manufsolv} can be used to fill as many layers of ghost cells as are needed for the stencil of the high-order method to be well-defined for all cells.
	
	We fix a small time-step of $\Delta t = 10 ^{-5}$ to make the spatial discretization the leading error term, at least for coarse grids; and we use Algorithm \ref{algo:everything} to approximate the solution. The errors obtained with Algorithm \ref{algo:everything} using different orders of accuracy are shown in Table \ref{tab:dirichlet}. The discrete $L^1$ error for $u$ at time $t=0.1$ is shown; the behaviour of the error for $v$ is completely analogous. As seen from this table, the high-order WENO schemes are able to approximate this solution with the designed order of accuracy, even up to the boundary. In particular, the 5th-order scheme has an order of accuracy of approximately $5$ and the seventh-order scheme also has the design accuracy on coarse grids although the accuracy does deteriorate on fine grids due to the fact that the errors are very very low on such grids. 
	
	Thus, this experiment brings forth the ability of this method to approximate different boundary conditions. However, we would like to remark that in general, it may not be possible to construct WENO schemes with very high-order up to the boundary and special boundary treatments, such as SBP-SAT schemes might be necessary.  Nevertheless, in \cite{Pressel2017}, the authors find that a similar scheme applied to the simulation of atmospheric flows produces more realistic results overall than lower-order central schemes, even when paired with a low-order approximation to no-flow boundary conditions.}
	
	\begin{table}
		\begin{center}
			\begin{tabular}{c || c c | c c | c c  }
				$n_x$ & u (3rd) & EOC & u (5th) & EOC  & u (7th) & EOC \\ \hline
				16 &  0.0179 & -- &  0.000505 & -- & 2.60e-5 & --   \\
				32 &  0.00551 & 1.70 & 2.07e-5 & 4.61 & 3.79e-7 &  6.10  \\
				64 & 0.00137  & 2.01 &  5.29e-7 & 5.29 & 1.22e-8 & 4.96  \\
				128 & 0.000320 &  2.10 & 1.91e-8 & 4.79 & 5.41e-9 & 1.17  
			\end{tabular}
		\end{center}
		\caption{Discrete $L^1$ numerical error for experiment \ref{dirichlet} (manufactured solution with Dirichlet BCs)}
		\label{tab:dirichlet}
	\end{table}

	\section{An arbitrarily high-order WENO scheme for advecting scalars in anelastic flows}
	\label{sec:5}
	Advection of scalars by an anelastic flow is modeled as:
	\begin{linenomath*}\begin{equation}\label{eq:scalar}
			\frac{\partial \phi}{\partial t} + \frac{1}{\rho_0} \sum_{i=1}^{d} \frac{\partial(\rho_0 u_i \phi)}{\partial x_i} = \Phi,
	\end{equation}\end{linenomath*}
	
	cf. eq. \eqref{eq:entropy}. Here, $\phi: \mathbb{R}^d \to \mathbb{R}$ is the scalar that is advected by the flow field $U = \{u_i\}_{1\leq i \leq d}$, which solves the anelastic equations \eqref{eq:momentum}, \eqref{eq:continuity}, and $\Phi$ represents sources and sinks for the scalar.		
	Our aim is to discretize the scalar advection equation \eqref{eq:scalar} on the Arakawa-C staggered grid. As pointed out in section \ref{sec:2} and shown in Figure \ref{fig:scalars}, point values of the scalar $\phi$ are stored at cell centers, i.e $\phi_{i+\frac{1}{2},j+\frac{1}{2}} \approx \phi(x_{i+\frac{1}{2}},y_{j+\frac{1}{2}})$. 
	
	In \cite{Pressel2015}, a WENO scheme for the advection of scalars is presented, with spatial derivatives being discretized in a manner completely analogous to Algorithm \ref{algo:pycles}. Reiterating the argument of Theorem \ref{theorem}, one can readily show that the resulting scheme is at most second-order accurate, even if very high-order interpolating polynomials are utilized within the WENO procedure. Our aim is to design an arbitrarily high-order accurate WENO scheme. 
	\begin{figure}
		\begin{center}
			\begin{tikzpicture}
			
			\draw[step=2cm,gray,very thin] (-0.5, -0.5) grid (6.5, 2.5);
			\draw[thick, line width=0.7mm] (2,0) rectangle (4,2);
			\node[rectangle,draw,xscale=1.25, yscale=1.25,rotate=45] at (3,1) {};
			\draw (2,1) circle[radius = 5pt];
			\draw (4,1) circle[radius = 5pt];
			\fill (3,1) circle[radius = 3pt];
			\fill (1,1) circle[radius = 3pt];
			\fill (5,1) circle[radius = 3pt];
			\node at (4.8, 1.4) {$F^{\phi,x}_{i+1,j+\frac{1}{2}}$};
			\node at (2.9, 1.4) {$\phi_{i+\frac{1}{2}, j+\frac{1}{2}}$};
			\node at (3, 0.6) {$\tilde{u}_{i+\frac{1}{2}, j+\frac{1}{2}}$};
			\end{tikzpicture}
		\end{center}
		\caption{Reconstruction of $\frac{\partial}{\partial x}(u \phi)$ on a staggered grid. The diamond marks the location of $\phi$ in the highlighted cell; numerical fluxes must be calculated at the circles. Interpolations of $u$ are required at the points marked with dots.}
		\label{fig:scalars}
	\end{figure}
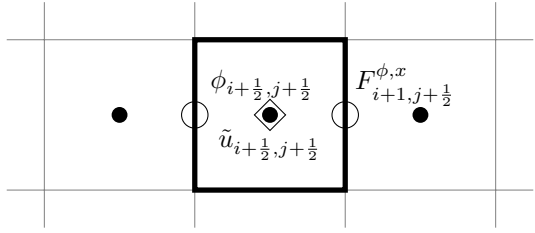
	\subsection{The high-order scheme}
	\label{scalarshigh}
	We start by using the divergence constraint and recasting the scalar advection equation \eqref{eq:scalar} in the \emph{non-conservative form},
	\begin{linenomath*}\begin{equation}\label{eq:scalar2}
			\frac{\partial \phi}{\partial t} + \frac{1}{\rho_0} \sum_{i=1}^{3} \frac{\partial(\rho_0 u_i \phi)}{\partial x_i} = \Phi
			\qquad \stackrel{\nabla \cdot (\rho_0 U) = 0}
			{\iff} \qquad
			\frac{\partial \phi}{\partial t} + \sum_{i=1}^{3}  u_i \frac{\partial \phi}{\partial x_i} = \Phi
	\end{equation}\end{linenomath*}
	We choose to discretize this non-conservative form of scalar advection on account of observed better stability of this formulation, \rev{due to the issue outlined in Remark \ref{rem:discrdiv}; as well as for slightly lower computational costs. We observe a behaviour with this formulation which is close to conservative, as numerical example \ref{bubble} shows.}
	
	The basis of our WENO discretization of \eqref{eq:scalar2} is to adapt ideas presented in section \ref{sec:3} to the present setting. For simplicity, we consider the problem in two space dimensions and assume a constant density $\rho_0$. Then, \eqref{eq:scalar2} reduces to
	\begin{equation}
		\label{eq:scal3}
		\phi_t + u \partial_x \phi + v \partial_y \phi = \Phi.
	\end{equation}
	Here, $U = (u,v)$ is the velocity that solves \eqref{eq:momentum}, \eqref{eq:continuity} in two space dimensions. A semi-discrete finite difference scheme (on the Arakawa-C grid) for discretizing \eqref{eq:scal3} is given by
	\begin{equation}
		\label{eq:ss1}
		\frac{d}{dt} \phi_{i+\frac{1}{2},j+\frac{1}{2}} (t) + D^{\phi,x}_{i+\frac{1}{2}, j+\frac{1}{2}} + D^{\phi,y}_{i+\frac{1}{2}, j+\frac{1}{2}} = \Phi_{i+\frac{1}{2},j+\frac{1}{2}}.
	\end{equation} 
	Here, $\Phi_{i+\frac{1}{2},j+\frac{1}{2}} = \Phi(x_{i+\frac{1}{2}},y_{j+\frac{1}{2}})$ is a point evaluation of the source term in \eqref{eq:scal3} and 
	\begin{equation}
		\label{eq:ss2}
		D^{\phi,x}_{i+\frac{1}{2}, j+\frac{1}{2}} \approx (u \partial_x \phi)(x_{i+\frac{1}{2}},y_{j+\frac{1}{2}}), \quad D^{\phi,y}_{i+\frac{1}{2}, j+\frac{1}{2}} \approx (v \partial_y \phi)(x_{i+\frac{1}{2}},y_{j+\frac{1}{2}}),
	\end{equation}
	are discretizations of the non-conservative products of velocities and spatial derivatives of the scalar. Thus, the scheme \eqref{eq:ss1} needs a routine to calculate spatial derivatives in a non-oscillatory manner and to very high order of accuracy. We follow ideas presented in section \ref{sec:3} and propose the following algorithm for defining $ D^{\phi,x}_{i+\frac{1}{2}, j+\frac{1}{2}}$. The definition of $D^{\phi,y}_{i+\frac{1}{2}, j+\frac{1}{2}}$ is analogous. 
	\begin{algorithm}
		\label{algo:scalars}~
		\textbf{Goal:} Find $D^{\phi,x}_{i+\frac{1}{2}, j+\frac{1}{2}}$ such that:
		
		\begin{linenomath*}\begin{equation*}
				D^{\phi,x}_{i+\frac{1}{2}, j+\frac{1}{2}} = \left( u  \frac{ \partial \phi}{\partial x} \right) (x_{i+\frac{1}{2}}, y_{j+\frac{1}{2}}) + O(\Delta x^{2k-1} + \Delta y^{2k-1}), \quad 1\leq k \in {\mathbb N}
		\end{equation*}\end{linenomath*}
		
		\begin{enumerate}
			\item Compute high-order approximations to the velocity field $u$ at cell centers,
			\[ \tilde{u}_{i+\frac{1}{2},j+\frac{1}{2}} = u(x_{i+\frac{1}{2}}, y_{j+\frac{1}{2}}) + O(\Delta x^{2k-1}) \]

			\item Compute the two biased WENO-$(2k-1)$ reconstructions of $\phi$ along $x$ at $(x_{i+1}, y_{j+\frac{1}{2}})$:
			\begin{align*} 
				\hat{\phi}_{i+1, j+\frac{1}{2}}^- & = W_{+\frac{1}{2}}( \phi_{i+\frac{1}{2}-(k-1), j+\frac{1}{2}}, \phi_{i+\frac{1}{2}-(k-2), j+\frac{1}{2}}, \dots, \phi_{i+\frac{1}{2}+(k-1), j+\frac{1}{2}} ) \\
				\hat{\phi}_{i+1, j+\frac{1}{2}}^+ & = W_{-\frac{1}{2}}( \phi_{i+\frac{1}{2}-(k-2),j+\frac{1}{2}}, \phi_{i+\frac{1}{2}-(k-1),j+\frac{1}{2}}, \dots, \phi_{i+\frac{1}{2}+k,j+\frac{1}{2}} ) 
			\end{align*}
			
			\item Choose a numerical flux function $G(u^-, u^+)$ (for instance upwind with respect to $\tilde{u}_{i+\frac{1}{2},j+\frac{1}{2}}$), and set
			\[ \hat{\phi}_{i+1, j+\frac{1}{2}} := G(\hat{\phi}_{i+1, j+\frac{1}{2}}^-, \hat{\phi}_{i+1, j+\frac{1}{2}}^+) \]
			
			\item Define
			\[ D^{\phi,x}_{i+\frac{1}{2}, j+\frac{1}{2}} = \tilde{u}_{i+\frac{1}{2},j+\frac{1}{2}} \frac{ \hat{\phi}_{i+1, j+\frac{1}{2}} -  \hat{\phi}_{i, j+\frac{1}{2}}}{\Delta x} \]
		\end{enumerate}
	\end{algorithm}
	
	\begin{remark}
		For the first step of Algorithm \ref{algo:scalars} we require high-order interpolations of the advecting velocity at cell centers. We recall from section \ref{sec:3} that these values have been already computed as an intermediate step of the interpolation procedure (Algorithm \ref{algo:2dinterp}, step \ref{step:intermediaterec}). 
	\end{remark}
	We can define $D^{\phi,y}_{i+\frac{1}{2}, j+\frac{1}{2}}$ by readily adapting Algorithm \ref{algo:scalars} and using the reconstructed values of $v$ at cell centers. The scheme \eqref{eq:ss1} can be easily extended to include variable densities and to discretize \eqref{eq:scalar2} in three space dimensions. A straightforward truncation error analysis, together with the arbitrarily high-order nature of WENO interpolations, demonstrates that the resulting scheme is as accurate as the order of the underlying WENO reconstructions.
	
	\begin{remark}
	\rev{
	WENO-based algorithms ensure positivity, and in general pointwise bounds on the advected scalar, better than the central scheems, as they control spurious oscillations near sharp gradients. Note however that we provide no rigorous guarantees of positivity preservation for scalars, let alone maximum or minimum principles. One can ensure bound preservation through the use of limiters, as e.g. \cite{Liang2014}, which would however reduce the order of accuracy; or heuristics such as the ingenious idea of ``sweeping'' in \cite{Liu2017}. To the best knowledge of the authors, no rigorously high-order, finite difference scheme exists which preserves positivity. For cases where high-order positivity is required, finite volume approaches are better suited. Note however that these require the evaluation of interpolations at many quadrature points, and therefore are computationally expensive.
	}
	\end{remark}
	
	\subsection{Numerical experiments}
	\subsubsection{Passive scalar}
	\label{passivescalar}
	We design this experiment to test the order of accuracy of the WENO schemes. In order do so, we will \emph{manufacture} an exact solution of \eqref{eq:scal3} in the two-dimensional periodic domain $[0,2\pi]^2$ by setting,
	\begin{linenomath*}\begin{align*} \label{eq:passivescalar}
			u(x,y,t) &= -\cos(t)\sin(x)\sin(2y) \\
			v(x,y,t) &= \cos(t)\cos(x) \sin(y)^2 \\
			\phi(x,y,t) &= 2 + \cos(x)\sin(y)\cos(t),
	\end{align*}\end{linenomath*}
	and computing the source term $\Phi$ analytically in order to satisfy \eqref{eq:scal3}, as well as source terms for velocity so that eq. \eqref{eq:momentum} holds. We observe that the velocity field $(u,v)$ defined above is divergence free and satisfies the two-dimensional form of the anelastic equations with a
	suitable pressure term. 
	
	We compare the approximations computed for $T=1$ with the exact solution, measuring error in $L^1$ norm. We choose $\Delta t$ small enough to ensure that the leading error term comes from spatial discretization. Table \ref{tab:passivescalar} shows the numerical error for $u$ (as the values for $v$ are very similar) and $\phi$. In this table, we compare the errors with the WENO scheme of \cite{Pressel2015} (denoted by Algorithm \ref{algo:pycles}) and the WENO scheme proposed here (Algorithm \ref{algo:everything}-\ref{algo:scalars}). We see from the table that that there is a loss of order of convergence with the previously existing WENO scheme and it is limited to at most second order accuracy even when interpolation polynomials of a higher degree are used. On the other hand, the proposed scheme recovers the design order of accuracy and has an error that is three to four orders of magnitude smaller than the existing WENO schemes. 
	
	\begin{table}
		\begin{center}
			\textbf{$L^1$ error (velocity and scalar)}
			\begin{tabular}{c || c c | c c || c c | c c }
				& \multicolumn{4}{c}{(Alg. \ref{algo:pycles})} & \multicolumn{4}{c}{(Alg. \ref{algo:everything}-\ref{algo:scalars})}\\ \hline
				$n_x$ & u (5th) & EOC & $\phi$ (5th) & EOC  & u (5th) & EOC & $\phi$ (5th) & EOC \\ \hline
				16 & 0.103 & -- & 0.395 & -- & 0.127 & -- & 0.563 & --  \\
				32 & 0.0367 & 1.489 & 0.126 & 1.651 & 0.01 & 3.661 & 0.0497 & 3.503  \\
				64 & 0.0104 & 1.823 & 0.0341 & 1.882  & 0.00042 & 4.578 & 0.00198 & 4.651 \\
				128 & 0.00269 & 1.947 & 0.00895 & 1.931 & 1.47e-05 & 4.84 & 7.68e-05 & 4.685 \\
				256 & 0.000688 & 1.968 & 0.00231 & 1.957  & 4.85e-07 & 4.916 & 2.62e-06 & 4.872 \\
				512 & 0.000174 & 1.983 & 0.000586 & 1.977 & 1.57e-08 & 4.949 & 8.54e-08 & 4.942
			\end{tabular}
		\end{center}
		\caption{Discrete $L^1$ numerical error for experiment \ref{passivescalar} (Passive scalar)}
		\label{tab:passivescalar}
	\end{table}
	
	\subsubsection{Active scalar: entropy and buoyancy}
	\label{bubble}
	Next, we consider a more realistic example of an \emph{active scalar}, transported by the anelastic flow. The scalar represents $s$, the dry entropy. This, in turn, induces buoyancy (cf. the equations for momentum \eqref{eq:momentum} and entropy \eqref{eq:entropy}), which will drive the evolution, following the equations:
	
	\begin{linenomath*}\begin{equation} \label{eq:buoy}
			b(x,y,z) = g \frac{T(x,y,z) - T_0(z)}{T_0(z)} 
	\end{equation} \end{linenomath*}
	\begin{linenomath*}\begin{equation}
			T(x,y,z) = \exp\left( \frac{ s(x,y,z) / \rho_0(z) - \tilde{s} + R_d\log(p_0(z)/\tilde{p})}{c_{pd}}\right), \label{eq:stoT}
	\end{equation} \end{linenomath*}
	where $T$ is air temperature, variables with subindex 0 stand for values in the reference state, tildes denote standard surface values ($\tilde{p} = 10^5\text{ Pa}$, $\tilde{s} = 6864.8 \text{ J kg}^{-1}\text{K}^{-1}$), $g=9.8\ m/s^2$, $R_d = 287.1\text{ J kg}^{-1} \text{K}^{-1}$ and $c_{pd} = 1004\text{ J kg}^{-1} \text{K}^{-1}$. A detailed derivation of expressions \eqref{eq:buoy} and \eqref{eq:stoT} can be found in \cite{Pressel2015}, for the more general case of moist air.
	
	The test case is taken from Straka et al. \cite{Straka1993}. The domain is $D = [-25600, 25600] \times [0, 6400]$. Here, the boundaries at $z=\{0, 6400\}$ are no-flow, with the boundaries in $x$ still periodic. We begin by a cold perturbation of the reference state. Let $L: \mathbb{R}^2 \longrightarrow \mathbb{R}^+$, 
	\begin{linenomath*}\begin{equation*}
			L(x,z) = \left( \left(\frac{x}{4000}\right)^2 + \left(\frac{z - 3000}{2000}\right)^2 \right)^{\frac{1}{2}}
	\end{equation*} \end{linenomath*}
	The perturbation in temperature from the reference atmospheric state is \begin{linenomath*}\begin{equation*}
			\Delta T(x,z) = -7.5\ (\cos( \min \{L(x,z), 1\}\  \pi) +1 )
	\end{equation*} \end{linenomath*}

	The evolution of the bubble is computed over a $512 \times 512$ grid, and results, computed with the fifth order versions of Algorithm \ref{algo:full} for momentum and Algorithm \ref{algo:scalars} for scalars, are presented in Figure \ref{fig:bubbleevol}. Due to the symmetry of the problem, plots show only the subdomain $[0, 22400]\times[0, 4000]$ (comprising $192 \times 320$ mesh points). The plot shows values for entropy temperature,
	\[ \theta(x,z) := 300  \exp\left(\frac{s(x,z) - \tilde{s}}{c_{pd}}\right). \]
	
	Variations in the total amount of entropy, during the full simulation, remain in the order of $0.01\%$. This suggests that the non-conservative character of the scheme has a minor effect, if at all, on the results. Moreover, the figure also shows that the proposed high-order WENO scheme of this paper approximates this realistic bubble evolution very well.

	\begin{figure}
		\begin{subfigure}{.5\textwidth}
			\centering
			\includegraphics[width=.8\linewidth]{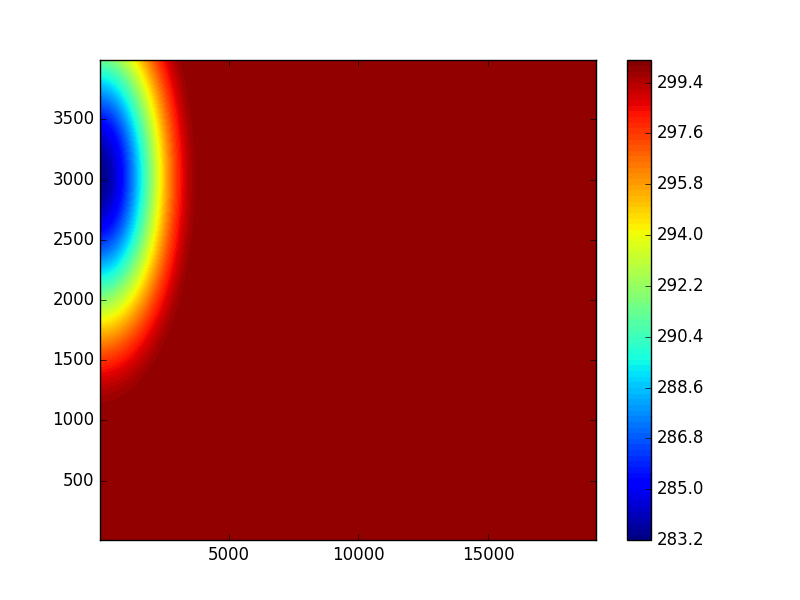}
			\caption{$T=0$}
		\end{subfigure}%
		\begin{subfigure}{.5\textwidth}
			\centering
			\includegraphics[width=.8\linewidth]{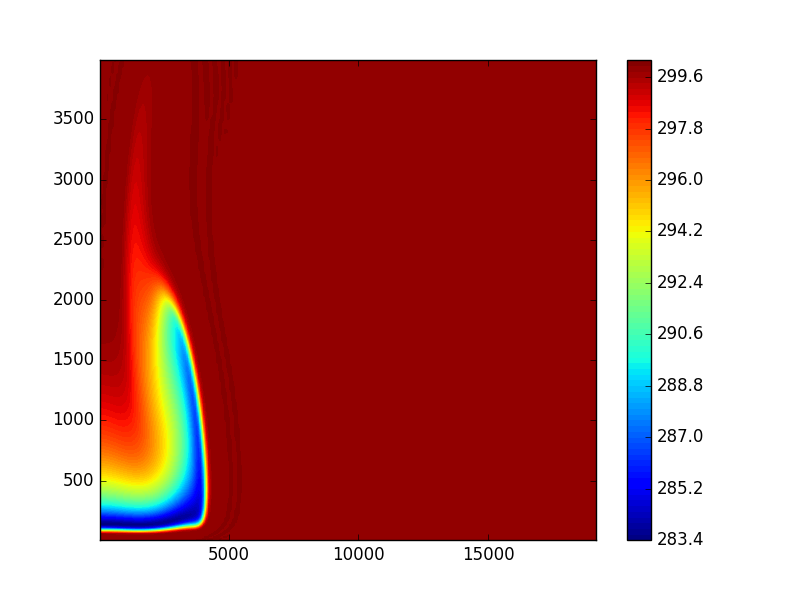}
			\caption{$T=300$}
		\end{subfigure}
		
		\begin{subfigure}{.5\textwidth}
			\centering
			\includegraphics[width=.8\linewidth]{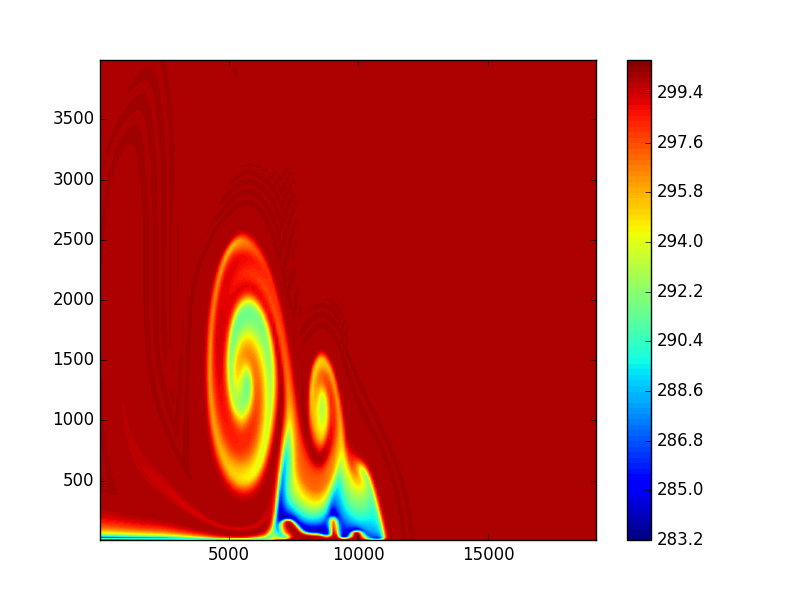}
			\caption{$T=600$}
		\end{subfigure}%
		\begin{subfigure}{.5\textwidth}
			\centering
			\includegraphics[width=.8\linewidth]{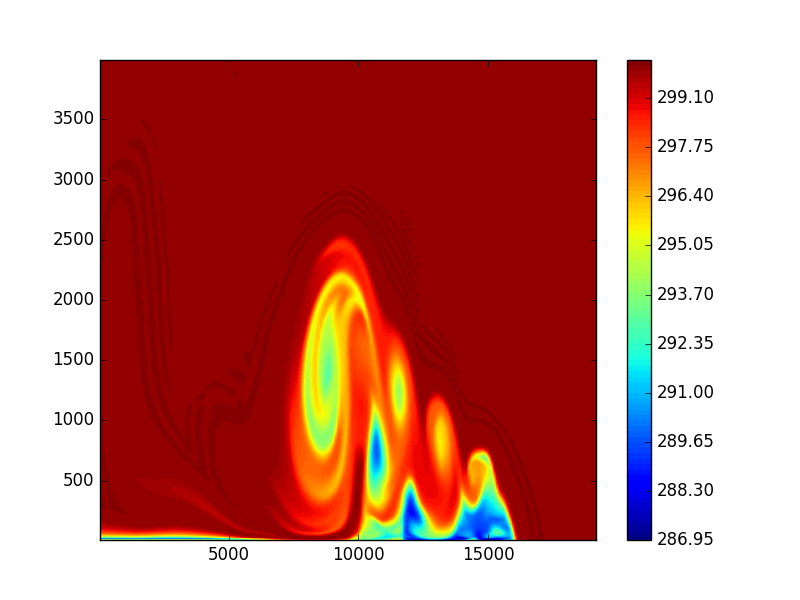}
			\caption{$T=900$}
		\end{subfigure}
		\caption{Entropy temperature $\theta$ for test case \ref{bubble} in a 2D vertical domain ($y$ axis represents height)}
		\label{fig:bubbleevol}
	\end{figure}

	\section{Summary}
	We consider the anelastic equations \eqref{eq:momentum}-\eqref{eq:continuity} and passive or active scalars, advected by the anelastic flow \eqref{eq:scalar}. These equations arise
	frequently in atmospheric sciences and other related fields. Although (high-order) central finite difference schemes have been extensively used to discretize these equations, they are deficient at
	resolving sharp gradients, such as in large eddy simulations (LES) of clouds, or at ensuring positivity or bound preservation for scalars, \rev{mainly due to spurious oscillations}. Consequently, WENO type schemes, that are well-suited for
	simulation of flows with sharp gradients, have been designed in recent years to approximate the anelastic flow equations. 
	
	We show that these WENO schemes on the underlying staggered grid, such as the scheme described in \cite{Pressel2015}, suffer from a loss in the design order of convergence due to the staggering of the 
	velocity components on cell edges. In particular, the scheme is restricted to at most second order of accuracy, irrespective of the underlying (high-order) piecewise polynomial reconstructions. Given this deficiency, we propose a novel scheme that is guaranteed to be arbitrarily high-order accurate. This scheme is based on a novel ENO interpolation of the velocity components to a set of collocated points and the subsequent use of a WENO reconstruction of
	the spatial derivatives. This combination ensures that the overall scheme has a order of accuracy which is consistent with the underlying WENO method. The advection scheme is complemented with standard pressure solvers and SSP Runge-Kutta methods. 
	Moreover, we design a WENO scheme that discretizes scalars advected by the anelastic flow \eqref{eq:scalar}. This scheme is based on a non-conservative formulation and reuses the ENO interpolated velocities at cell centers. It is also shown to be arbitrarily high-order accurate. 
	
	We have presented various numerical experiments comparing different schemes and summarize the results below:
	\begin{itemize}
		\item Both the proposed WENO scheme and the WENO scheme of \cite{Pressel2015} are very robust at resolving discontinuities and sharp gradients. We noticed a slight increase in robustness with our proposed scheme. This robustness of WENO schemes
		is in marked contrast to the failure of central difference schemes at resolving sharp gradients on account of the generation of spurious oscillations.
		
		\item The proposed WENO scheme was verified to possess an order of convergence that it is consistent with the design order, i.e the order of accuracy dictated by the underlying order of piecewise polynomials. This should be contrasted with 
		existing WENO scheme of \cite{Pressel2015} and even the central difference schemes of \cite{Skamarock2008WRF}, where the observed order of convergence was at most two. We demonstrate this gain in accuracy with both velocity fields as well as scalars. This guaranteed high-order of accuracy is not just of academic interest. In figure \ref{fig:error}, we show an example where the arbitrarily high-order WENO schemes have the best error vs. cost performance of all the competing schemes.
		
		\item Both WENO schemes were observed to be particularly suitable for problems with long time integration, when compared to central schemes. 
		
	\end{itemize}
	
	Based on the above observations, it is reasonable to argue that WENO schemes are considerably superior when approximating anelastic flows containing sharp gradients. Since such flows are ubiquitous, WENO schemes constitute a natural
	discretization framework for anelastic flows. The main advantage of the proposed scheme over existing WENO schemes is the increased order of accuracy. This results in errors that are several orders of magnitude lower than those generated by existing WENO schemes on staggered grids. On the other hand, the proposed scheme is more computationally expensive as additional ENO interpolations have to be performed. However, the additional cost is of the same order as the cost for the WENO
	reconstructions in the first place. Hence, this additional cost can be justified if higher order of accuracy is desired for the given problem, particularly on reasonably coarse grids. 	
	
	\section*{Acknowledgments}
	The research of SM was partially funded by European Research Council (ERC) Consolidator Grant (CoG) 770880 COMANFLO.
	
	\begin{appendices}
		\section{Central schemes on a staggered grid}
		\label{appendixcentral}
		
		In this appendix we review existing, popular central schemes for the advection of scalars on a staggered grid. 
		
		\subsection{Wicker-Skamarock schemes}
		\label{wickerskamarock}
		The discussion of the Wicker-Skamarock schemes will be done here for 1D grids. To generalize to higher dimensions it suffices to repeat the procedure independently for the flux in every direction.
		
		The scheme discussed in this subsection is presented in Wicker and Skamarock \cite{Wicker2002} as follows: for $\frac{\partial}{\partial t}\phi + \frac{\partial}{\partial x}(u \phi) = 0$, we use a discretization of the type:
		\begin{equation}
			\frac{\phi_{i+\frac{1}{2}}^{n+1} - \phi_{i+\frac{1}{2}}^n}{\Delta t} = - \frac{F_{i+1}^n - F_{i}^n}{\Delta x} \label{eq:wickerskamarock}
		\end{equation}
		where several possibilities (with different orders of accuracy) are proposed for the spatial discretization, e.g.:
		\begin{linenomath*}\begin{equation} \label{eq:f4}
				F^{4th}_{i} :=   \frac{u_{i}}{12}[ 7(\phi_{i+\frac{1}{2}} + \phi_{i-\frac{1}{2}}) - (\phi_{i+\frac{3}{2}} + \phi_{i-\frac{3}{2}}) ]
		\end{equation} \end{linenomath*}
		\begin{linenomath*}\begin{equation} \label{eq:f6}
				F^{6th}_{i} = \frac{u_i}{60} [ 37(\phi_{i+\frac{1}{2}} + \phi_{i-\frac{1}{2}}) - 8(\phi_{i+\frac{3}{2}} + \phi_{i-\frac{3}{2}}) + (\phi_{i+\frac{5}{2}} + \phi_{i-\frac{5}{2}})] 
		\end{equation}\end{linenomath*}

		\subsubsection{Order of convergence for the Wicker-Skamarock scheme}
		
		In \cite{Wicker2002}, the above expressions for the flux are shown to produce high-order accurate results for constant advecting velocities. However, in a general case where velocities are not constant, they are limited to second-order accuracy, as acknowledged in Skamarock and Klemp \cite{Skamarock2008}. As an example, we prove it here for $F^{4th}$.
		
		\begin{theorem}
			\label{skamarock2}
			For $u$, $\phi$ sufficiently smooth, and taking $F = F^{4th}$ as in \eqref{eq:f4}, the central scheme \eqref{eq:wickerskamarock} is at most second order accurate.
		\end{theorem}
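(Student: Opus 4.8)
The plan is to follow the same truncation-error strategy as in the proof of Theorem~\ref{theorem}, exploiting the interpretation of the Wicker--Skamarock flux through the deconvolution trick of Proposition~\ref{remark:wenoDeriv}. First I would Taylor-expand the bracketed symmetric combination in \eqref{eq:f4} about $x_i$. Since it is symmetric it contains only even powers of $\Delta x$, and a short computation gives
\[
\frac{1}{12}\left[ 7(\phi_{i+\frac12}+\phi_{i-\frac12}) - (\phi_{i+\frac32}+\phi_{i-\frac32}) \right] = \phi(x_i) - \frac{\Delta x^2}{24}\,\phi''(x_i) + O(\Delta x^4).
\]
The crucial observation — and the source of the loss of accuracy in the variable-coefficient case — is that this combination is \emph{not} a high-order approximation of the point value $\phi(x_i)$; rather, the weights $7/12,\,-1/12$ are chosen precisely so that it agrees, up to $O(\Delta x^4)$, with the value $h_\phi(x_i)$ of the function $h_\phi$ whose sliding cell average (cf. \eqref{eq:hdef}) equals $\phi$, namely $h_\phi = \phi - \tfrac{\Delta x^2}{24}\phi'' + O(\Delta x^4)$. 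Hence $F^{4th}_i = u(x_i)\,h_\phi(x_i) + O(\Delta x^4)$.

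Next I would compare $u\,h_\phi$ with $h_{u\phi}$, the deconvolution of the product $u\phi$. Since $h_{u\phi} = u\phi - \tfrac{\Delta x^2}{24}(u\phi)'' + O(\Delta x^4)$, one gets
\[
u(x)\,h_\phi(x) = h_{u\phi}(x) + \frac{\Delta x^2}{24}\Big( (u\phi)'' - u\phi'' \Big)(x) + O(\Delta x^4) = h_{u\phi}(x) + \frac{\Delta x^2}{24}\big(u''\phi + 2u'\phi'\big)(x) + O(\Delta x^4).
\]
Substituting into \eqref{eq:wickerskamarock}, forming the finite difference $\tfrac1{\Delta x}(F^{4th}_{i+1}-F^{4th}_i)$, applying the fundamental theorem of calculus to the $h_{u\phi}$ part (which, exactly as in the proof of Proposition~\ref{remark:wenoDeriv}, reproduces $\partial_x(u\phi)(x_{i+\frac12})$ up to $O(\Delta x^4)$), and Taylor-expanding the remaining piece about $x_{i+\frac12}$, I expect to arrive at
\[
\frac{F^{4th}_{i+1} - F^{4th}_i}{\Delta x} = \partial_x(u\phi)(x_{i+\frac12}) + \frac{\Delta x^2}{24}\,\partial_x\big(u''\phi + 2u'\phi'\big)(x_{i+\frac12}) + O(\Delta x^3).
\]

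The last step is to note that the displayed $O(\Delta x^2)$ term does not vanish in general: the factor $u''\phi + 2u'\phi'$ is identically zero only in degenerate cases — in particular when $u$ is constant, which recovers the fourth-order accuracy reported in \cite{Wicker2002} — so exhibiting any smooth pair $(u,\phi)$ with $\partial_x(u''\phi + 2u'\phi')\not\equiv 0$ shows the truncation error is genuinely $\Theta(\Delta x^2)$, and hence the scheme is at most second-order accurate. I expect the only real obstacle to be bookkeeping: tracking which error contributions are $O(\Delta x^4)$ versus $O(\Delta x^3)$ after dividing finite differences by $\Delta x$, and checking that the $O(\Delta x^4)$ remainder in $F^{4th}_i - u(x_i)h_\phi(x_i)$ depends smoothly enough on $i$ that its difference quotient is subleading; both are routine given the assumed smoothness of $u$ and $\phi$. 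An essentially identical argument, carrying one more term, handles $F^{6th}$ in \eqref{eq:f6}.
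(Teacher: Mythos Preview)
Your argument is correct and reaches the same conclusion, but the route differs from the paper's. The paper proceeds by brute-force Taylor expansion: it writes $F^{4th}_i = u_i Q_i$, expands $u_i$ and $u_{i+1}$ about $x_{i+\frac12}$, separately computes $Q_{i+1}\pm Q_i$ by expanding the $\phi$ values, and then assembles $(F^{4th}_{i+1}-F^{4th}_i)/\Delta x$ term by term to exhibit an $O(\Delta x^2)$ remainder. Your approach instead recognizes the bracketed combination in \eqref{eq:f4} as (to fourth order) the deconvolution $h_\phi(x_i)$ of Proposition~\ref{remark:wenoDeriv}, so that $F^{4th}_i \approx u(x_i)\,h_\phi(x_i)$, and then compares $u\,h_\phi$ with $h_{u\phi}$ to isolate the obstruction $\tfrac{\Delta x^2}{24}(u''\phi+2u'\phi')$. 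This is more conceptual: it makes transparent \emph{why} the scheme achieves fourth order when $u$ is constant (then $u\,h_\phi = h_{u\phi}$ trivially) and yields the leading error term in closed form with essentially no bookkeeping. The paper's direct expansion, on the other hand, requires no appeal to the implicit function $h$ and is perhaps more self-contained for a reader unfamiliar with that trick. Both arguments are short; yours generalizes more cleanly to $F^{6th}$ and beyond, since one only needs to carry more terms in the expansion of $h_\phi$.
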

		
		\begin{proof}
			Let us write the fluxes as:
			
			\begin{linenomath*}\begin{equation*}
					F^{4th}_{i} = u_i Q_i, \quad F^{4th}_{i+1} = u_{i+1} Q_{i+1}, \qquad \text{ with } Q_{i} := \frac{1}{12} \left( 7(\phi_{i+\frac{1}{2}} + \phi_{i-\frac{1}{2}}) - (\phi_{i+\frac{3}{2}} + \phi_{i-\frac{3}{2}}) \right)
			\end{equation*}\end{linenomath*}
			
			By Taylor expansion, we know that:
			\begin{linenomath*}\begin{equation*}
					u_i = u(x_{i+\frac{1}{2}}) - u'(x_{i+\frac{1}{2}})\frac{\Delta x}{2} + \frac{u''(x_{i+\frac{1}{2}})}{2}\frac{\Delta x^2}{4} + O(\Delta x^3)
			\end{equation*}\end{linenomath*}
			\begin{linenomath*}\begin{equation*}
					u_{i+1} = u(x_{i+\frac{1}{2}}) + u'(x_{i+\frac{1}{2}})\frac{\Delta x}{2} + \frac{u''(x_{i+\frac{1}{2}})}{2}\frac{\Delta x^2}{4} + O(\Delta x^3)
			\end{equation*}\end{linenomath*}
			
			Then:
			\begin{linenomath*}\begin{equation} 
					(F^{4th})_{i+1}^n - (F^{4th})_{i}^n = \left(u(x_{i+\frac{1}{2}}) + \frac{u''(x_{i+\frac{1}{2}}) \Delta x^2}{8}\right) (Q_{i+1} - Q_i) + \frac{u'(x_{i+\frac{1}{2}}) \Delta x}{2} (Q_{i+1} + Q_i) + O(\Delta x^3) \label{eq:toexpand} 		 \end{equation}\end{linenomath*} 
			
			Using Taylor expansions of $\{ \phi_{i-\frac{3}{2}}, \phi_{i-\frac{1}{2}}, \dots, \phi_{i+\frac{5}{2}} \}$, immediately:
			\begin{linenomath*}\begin{equation*}
					Q_{i+1} - Q_{i} =  \phi'(x_{i+\frac{1}{2}}) \Delta x - \frac{\phi^{(5)}(x_{i+\frac{1}{2}}) \Delta x^5}{30} + O(\Delta x^7)
			\end{equation*}\end{linenomath*}
			\begin{linenomath*}\begin{equation*}
					Q_{i+1} + Q_{i} = 2\phi(x_{i+\frac{1}{2}}) + \frac{\phi''(x_{i+\frac{1}{2}}) \Delta x^2}{6}+ O(\Delta x^4)
			\end{equation*}\end{linenomath*}
			
			Replacing these values in equation \eqref{eq:toexpand}, and discarding error terms $\Delta x^k$ for $k \ge 3$, some computations give
			\begin{linenomath*}\begin{equation*}
					\frac{(F^{4th})_{i+1}^n - (F^{4th})_{i}^n}{\Delta x} = (u\phi)'(x_{i+\frac{1}{2}}) + O(\Delta x^2)
			\end{equation*}\end{linenomath*}
		\end{proof}
		\begin{remark} It is easy to check that all error terms of order $\Delta x^2$ have a derivative of $u$ as a factor, whereas the first error term with no derivatives of $u$ is $\frac{u \phi^{(5)}}{30} \Delta x^4$. Thus the design order is obtained for spatially constant advecting velocity.
		\end{remark}
		
		\subsection{Morinishi schemes}
		A second popular set of conservative approximations to the fluxes is described by Morinishi et al. \cite{Morinishi1998}. They can quite easily be described in full 3D generality, although some notation is required to write them in a convenient manner:
		
		\begin{linenomath*}\begin{equation}
				\label{eq:morinterp} \bar{\varphi}^{n,x}(x_i,y_j,z_k) := \frac{\varphi(x_i + \frac{n}{2}\Delta x, y_j, z_k) + \varphi(x_i - \frac{n}{2}\Delta x, y_j, z_k)}{2} \end{equation}\end{linenomath*}
		
		\begin{linenomath*}\begin{equation}\label{eq:morderiv} \frac{\delta_n \varphi}{\delta_n x}(x_i,y_j, z_k) := \frac{\varphi(x_i + \frac{n}{2} \Delta x, y_j,z_k) - \varphi(x_i - \frac{n}{2} \Delta x, y_j,z_k) }{n \Delta x} \end{equation}\end{linenomath*}
		
		The corresponding definitions for \eqref{eq:morinterp} (interpolation) and \eqref{eq:morderiv} (approximation to the derivative) in $y$ and $z$ are analogous. With these, the schemes can be written as follows.
		
		\subsubsection{4th order Morinishi scheme}
		
		\begin{linenomath*}\begin{align*} [\nabla \cdot (u \otimes u)]_i  & = \frac{9}{8} \sum_{j=1}^{3} \frac{\delta_1}{\delta_1 x_j} \left[ \left( \frac{9}{8} \bar{u}^{1,x_i}_j - \frac{1}{8} \bar{u}^{3,x_i}_j \right) \bar{u}^{1,x_j}_i \right] \\ & - \frac{1}{8}\sum_{j=1}^{3} \frac{\delta_3}{\delta_3 x_j}\left[ \left(  \frac{9}{8} \bar{u}^{1,x_i}_j - \frac{1}{8} \bar{u}^{3,x_i}_j \right) \bar{u}^{3,x_j}_i \right] + O(\Delta x^4) 
		\end{align*}\end{linenomath*}
		
		\subsubsection{6th order Morinishi scheme}
		
		\begin{linenomath*}\begin{align*} 
				[\nabla \cdot (u \otimes u)]_i & = \frac{150}{128} \sum_{j=1}^{3} \frac{\delta_1}{\delta_1 x_j}\left[ \left( \frac{150}{128} \bar{u}^{1,x_i}_j -\frac{25}{128}\bar{u}^{3,x_i}_j + \frac{3}{128} \bar{u}^{5,x_i}_j \right) \bar{u}^{1,x_j} _i \right]\\ 
				& - \frac{25}{128} \sum_{j=1}^{3} \frac{\delta_1}{\delta_1 x_j}\left[ \left( \frac{150}{128} \bar{u}^{1,x_i}_j -\frac{25}{128}\bar{u}^{3,x_i}_j + \frac{3}{128} \bar{u}^{5,x_i}_j \right) \bar{u}^{3,x_j} _i \right] \\
				& + \frac{3}{128} \sum_{j=1}^{3} \frac{\delta_1}{\delta_1 x_j}\left[ \left( \frac{150}{128} \bar{u}^{1,x_i}_j -\frac{25}{128}\bar{u}^{3,x_i}_j + \frac{3}{128} \bar{u}^{5,x_i}_j \right) \bar{u}^{5,x_j} _i \right] + O(\Delta x^6)
		\end{align*}\end{linenomath*}

	\end{appendices}

	\bibliography{paper.bib}
	\bibliographystyle{siam}
\end{document}